\theoremstyle{plain}
\newtheorem{theorem}{Theorem}[section]
\newtheorem{prop}[theorem]{Proposition}
\newtheorem{lemma}[theorem]{Lemma}
\theoremstyle{definition}
\newtheorem{definition}[theorem]{Definition}
\newtheorem{example}{\textit{Example}} 
\numberwithin{equation}{section}
\tikzset{%
world/.style={circle,draw,minimum size=0.5cm,fill=gray!15},
label/.style={shape=rectangle, inner sep=6pt},
shaded/.style={draw, shape=circle, fill=black!35, inner sep=1.4pt},
unshaded/.style={draw, shape=circle, fill=white, inner sep=1.4pt},
quasi/.style={draw, shape=rectangle, rounded corners=3pt, fill=white, inner sep=2.5pt, minimum height=14.5pt},
blob/.style={draw, shape=rectangle, rounded corners=12pt, thin, densely dotted},
order/.style={thin},
curvy/.style={thin, looseness=1.2, bend angle=70},
fatcurvy/.style={thin, looseness=1.7, bend angle=75},
map/.style={->, densely dashed, shorten >=5pt, shorten <=5pt, >=stealth', looseness=1.1},
operationgj/.style={->, densely dashed, shorten >=5pt, shorten <=18pt, >=stealth', looseness=1.1},
relationlejk/.style={->, shorten >=5pt, shorten <=5pt, >=stealth'},
auto}
\newcommand{\eusbA}{\medsub e {\kern-0.75pt\A\kern-0.75pt}}
\newcommand{\bbar}[1]{{\underline{\mathbf{#1}}}}
\newcommand{\twiddle}[1]{{\smash{\underset{\raise.375ex\hbox{$\smash\sim$}}
       {\mathbf{#1}}}\vphantom{\underline{\mathbf{#1}}}}} 
\newcommand{\stwiddle}[1]{\smash{\underset{\smash{\raise.1ex\hbox{\small$\sim$}}}
                         {\mathbf{#1}}}\vphantom{#1}}
\newcommand{\twoB}{\bbar{2}}
\newcommand{\twoT}{\twiddle 2}
\newcommand{\X}{\mathbf X}
\newcommand{\mpe}[2]{\CG^{\rm mp}(#1,#2)}
\newcommand{\mph}[2]{\CL^{\rm mp}(#1,#2)}
\newcommand{\mpm}[2]{\CG_\T^{\rm mp}(#1,#2)}
\newcommand{\cat}[1]{\boldsymbol{\mathscr{#1}}}
\newcommand{\CL}{\cat L}
\newcommand{\CG}{\cat G}
\newcommand{\Lalg}{\mathbf{L}}
\font\bmi=cmmi8 scaled 1440
\newcommand{\powerset}{\raise.6ex\hbox{\bmi\char'175 }}
\newcommand{\T}{\mathscr{T}}
\DeclareMathOperator{\dom}{dom} 
\renewcommand{\le}{\leqslant}
\newcommand{\MDFIP}[2]{\langle {\uparrow} #1,{\downarrow} #2\rangle}
\newcommand{\varex}{\varepsilon_x}
\begin{document}
\title[Dual Plo\v s\v cica spaces of ortholattices]{Dual Plo\v s\v cica spaces of ortholattices}


\author[Andrew Craig  \and Miroslav Haviar]%
{Andrew Craig* \and Miroslav Haviar**}

\newcommand{\acr}{\newline\indent}

\address{\llap{*\,}Department of Mathematics and Applied Mathematics \acr
University of Johannesburg\acr
Auckland Park 2006\acr
                   SOUTH AFRICA
\acr and \acr
National Institute of Theoretical and Computational Sciences (NITheCS)\acr 
Johannesburg\acr 
 SOUTH AFRICA}
\email{acraig@uj.ac.za}

\address{\llap{**\,} Department of Mathematics\acr
Faculty of Natural Sciences\acr
M. Bel University\acr
Tajovsk\'{e}ho 40, 974 01 Bansk\'{a} Bystrica\acr SLOVAKIA
\acr and \acr
Department of Mathematics and Applied Mathematics \acr
University of Johannesburg\acr
Auckland Park 2006\acr
SOUTH AFRICA}
\email{miroslav.haviar@umb.sk}


\thanks{The first author acknowledges the National Research Foundation (NRF) of South Africa grant 127266
as well as the Slovak National Scholarship Programme.
The second author acknowledges support by Slovak VEGA grant 1/0152/22.}

\subjclass[2020]{Primary 06C15, 06B15, 05C20}
\keywords{ortholattice, lattice, TiRS digraph, Plo\v s\v cica space, ortho-Plo\v s\v cica space}

\dedicatory{Dedicated to the memory of Professor G\"unther Eigenthaler.}
\begin{abstract}
We describe digraphs with topology which give dual representations of ortholattices. 
This is done via so-called dual
Plo\v{s}\v{c}ica spaces of lattices. 
First, we improve the definition of 
Plo\v{s}\v{c}ica spaces from an earlier paper to give a straight and natural generalisation of the  total order disconnectedness of Priestley spaces. Then we define the dual space of a general ortholattice as the dual Plo\v{s}\v{c}ica space of the lattice-reduct of the ortholattice equipped with a map representing the orthocomplement operation. We introduce an abstract ortho-Plo\v{s}\v{c}ica space capturing the properties of the dual space of an ortholattice, and we present dual representation theorems between general ortholattices and the ortho-Plo\v s\v cica spaces. 
We illustrate our dual representations by examples. 
\end{abstract}

\maketitle


\section{Introduction}\label{sec:intro}

In 1936 G.~Birkhoff and J.~von Neumann \cite{BN36}
proposed  taking the
lattice of closed subspaces of a Hilbert  space as an appropriate model for ``the logic of quantum mechanics''.
Such a lattice equipped with the
relation of orthogonal complement  has been described as an
ortholattice. In case the considered Hilbert space is finite-dimensional, the ortholattice of its closed subspaces is modular. Yet in case of an infinite-dimensional Hilbert space only a weaker law, the so-called orthomodular law, is satisfied as was shown by K.~Husimi \cite{H37} in 1937.    

The systematic treatment of  theory of ortholattices and orthomodular lattices was presented in the monographs by G.~Kalmbach~\cite{K83}, L.~Beran~\cite{B84},  P.~Pt\'ak and S.~Pulmannov\'a~\cite{PP91}, and by A.~Dvure\v censkij and S.~Pulmannov\'a~\cite{DP00}. These books are recommended for the reader as basic sources on orthomodular lattices and quantum structures. (For the latter see also the paper~\cite{D86} published in this journal almost forty years ago.)

In our earlier paper~\cite{P6} 
we translated a description of the Urquhart dual spaces (L-spaces) of general lattices with bounds from \cite{U78} into Plo\v{s}\v{c}ica's setting~\cite{Plos95} and we naturally called these objects \emph{Plo\v{s}\v{c}ica spaces}. These spaces are 
digraphs with topology $\mathbb{P}=(X,E,\T)$,
where $E$ is the Plo\v{s}\v{c}ica edge set and $\T$ is the  compact topology that was used by Urquhart~\cite{U78}. 
We expressed our belief in \cite{P6} that the Plo\v{s}\v{c}ica spaces are easier to work with than the Urquhart L-spaces equipped with two quasi-order relations. Also we believe that recovering the lattice from its Plo\v{s}\v{c}ica dual via the maximal partial morphisms is easier and more natural than  recovering the lattice from its dual L-space via the doubly closed stable sets in the Urquhart representation. Another advantage of using the Plo\v{s}\v{c}ica spaces is that in case the lattice $\Lalg$ is distributive, the edge relation $E$ in a Plo\v{s}\v{c}ica space becomes the order relation used in 
Priestley duality and the Plo\v{s}\v{c}ica space becomes a Priestley space.

In Section~\ref{sec:Plospaces} of this paper we show that our definition of the Plo\v{s}\v{c}ica spaces from \cite{P6} can be simplified such that it gives a straight and natural generalisation of the  total order disconnectedness of Priestley spaces (cf. \cite{Pr70}, \cite{Pr72}). After simplifying Plo\v{s}\v{c}ica spaces we will mention other results from~\cite{P6} concerning 
Plo\v{s}\v{c}ica spaces that will be needed in our dual representation theorems for ortholattices.

In Section~\ref{sec:orthospaces} we define the dual space of a general ortholattice 
$\Lalg'$
as the dual Plo\v{s}\v{c}ica space of the lattice-reduct of the ortholattice 
$\Lalg'$ equipped with a map $g$, which 
will represent the orthocomplement operation on the dual side. We present the axioms 
satisfied
by this map $g$ on the dual 
space
and then we define an abstract ortho-Plo\v{s}\v{c}ica space as a Plo\v{s}\v{c}ica space equipped with such a $g$-map satisfying the 
axioms.  

In Section~\ref{sec:dual repr} we present the dual representation theorems between general ortholattices and the ortho-Plo\v{s}\v{c}ica spaces. These are based on the dual representation theorems between general lattices with bounds and 
Plo\v{s}\v{c}ica spaces presented in~\cite{P6}. 

In Section~\ref{sec:examples} we illustrate our 
results on three examples.  

\section{Preliminaries}\label{sec:prelim} 

An \emph{ortholattice\/} is an algebra 
$\Lalg' 
= (L;\lor ,\land ,',0,1)$, where $(L;\lor,\land,0,1 )$ is a bounded lattice and~$'$ is the unary operation of 
orthocomplementation, i.e. the 
following identities are satisfied for all $a, b\in L$:
\begin{itemize}
\item[] $(a')' = a$, $a \land a'= 0$, $a \lor a' = 1$, 
$(a \land b)' = a'\lor b'$, $(a \lor b)' = a'\land b'$.
\end{itemize}

By an \emph{orthomodular lattice\/} is meant an ortholattice 
satisfying the \emph{orthomodular law}, which says that for all $a, b\in L$, 
$$
a \leqslant 
b\ \Rightarrow \ b = a \lor (b\land a').
$$
An equivalent form of the orthomodular law is the identity ($a, b\in L$)
$$b = (b\land a)\lor [b \land (b \land a)'].
$$

To compare 
orthomodular lattices with 
Boolean algebras, various types of symmetric differences can be defined 
on orthomodular lattices (see~\cite{DDL96} and  \cite{D05}), which then coincide with the conventional symmetric difference 
in the case of Boolean algebras. 
Also, the distributive law can be used when working with Boolean algebras, but does not  in general hold in orthomodular lattices. 

Now we recall basic concepts
and facts  from~\cite{P6} that will be needed later.

By a \emph{partial homomorphism} from a lattice 
$\Lalg = (L;\vee,\wedge,0,1)$ 
into the 
lattice
$\twoB=(\{0,1\};\vee,\wedge,
0,1)$ is meant a partial map $f : L \to \{0,1\}$ such that $\dom f$ is a $(0,1)$-sublattice of $\Lalg$ and 
$f\!\! \upharpoonright_{\dom f}$ is a $(0,1)$-lattice homomorphism. By a \emph{maximal partial homomorphism}
(briefly \emph{MPH}) is meant a
partial homomorphism with no proper extension. We will denote by $\mph{\Lalg}{\twoB}$ the set of all MPHs from $\Lalg$ into~$\twoB$.

Plo\v{s}\v{c}ica's binary relation on the set
$P_{\Lalg}= \mph{\Lalg}{\twoB}$ 
of MPHs from $\Lalg$ to $\twoB$  
is defined  (see~\cite{Plos95} or \cite[p.~4]{P6}) such that for any  MPHs  $f, g \in P_{\Lalg}$,
\begin{itemize}
\item[(E1)] $(f,g)\in E \quad
\iff \quad (\forall x \in \dom f \cap \dom g) \ f(x) \le g(x)$.
\end{itemize}

For a general lattice $\Lalg = 
(L;\vee,\wedge,0,1)$ 
with bounds, the MPHs from $\Lalg$ to $\twoB$ naturally correspond to the \emph{maximal disjoint filter-ideal pairs} (briefly \emph{MDFIPs}) of $\Lalg$ as we recall now their definition.  (See e.g. \cite[p.~76]{Plos95} or \cite[p.~4]{P6}.)

\begin{definition}\label{def:DFIP}
A pair $\langle F, I \rangle$ is said to be a disjoint filter-ideal pair (briefly DFIP) of a lattice $\Lalg$ with bounds if $F$ is a filter of $\Lalg$ and $I$ is an ideal of $\Lalg$ such that $F \cap I = \varnothing$. By a maximal disjoint filter-ideal pair $\langle F, I \rangle$ is meant a DFIP of $\Lalg$ such that there is no DFIP $\langle G, J\rangle \neq \langle F, I \rangle$ with $F \subseteq G$ and $I \subseteq J$.
\end{definition}

When the base set of our dual space to $\Lalg$ will
be the set $X_{\Lalg}$ of all MDFIPs of $\Lalg$, 
then the Plo\v{s}\v{c}ica relation $E$ is determined on 
$X_{\Lalg}$ such that (see \cite[p.~76]{Plos95} or~\cite[p.~5]{P6}) for any two MDFIPs $\langle F, I \rangle$ and $\langle G, J \rangle$, 
\begin{itemize}
\item[(E2)] $\langle F, I \rangle E \langle G, J \rangle \quad
\iff \quad F \cap J = \emptyset$.
\end{itemize}

In Craig, Gouveia and Haviar~\cite{P3} we described properties of the digraphs
$(V,E)$ 
dual to general lattices with bounds. We identified three properties (Ti), (R), (S) of these digraphs, which 
were called \emph{TiRS graphs} in~\cite{P3} (see also~\cite{P4}). Yet starting from our more recent papers~\cite{P5} and~\cite{P7}, we prefer to use the terminology 
\emph{TiRS digraphs}. To present their definition, we recall that we use the notations $xE = \{\, y \in V \mid (x,y) \in E \,\}$ and $Ex = \{\, y \in V \mid (y,x) \in E  \,\}$.

\begin{definition}[{\cite[Definition 2.2]{P3}}]\label{def:TiRS} A TiRS digraph $G = (V,E)$ is a set $V$ and a
reflexive relation $E \subseteq V\times V$ such that:
\begin{itemize}
\item[(S)] If $x,y \in V$ and $x\neq y$ then $xE\neq yE$ or $Ex \neq Ey$.
\item[(R)] For all $x,y \in V$, if $xE \subset yE$ then $(x,y)\notin E$, and
if $Ex \subset Ey$ then $(y,x)\notin E$.
\item[(Ti)] For all $x,y \in V$, if
$(x,y)\in E$ then there exists $z \in V$ such that $zE \subseteq xE$ and $Ez \subseteq Ey$.
\end{itemize}
\end{definition}

It is important to recall that by \cite[Proposition 2.3]{P3} 
(see also~\cite[p.~5]{P6}),
the dual digraphs
$(P_\Lalg,E)$ and $(X_\Lalg,E)$ of any general lattice $\Lalg$ with bounds (possibly infinite)
are TiRS digraphs.

We also notice that (S) in 
the 
definition above stands for ``separation''.
If $G=(P_\Lalg,E)$, then for $x,y \in P_\Lalg$, $x\ne y$ obviously implies $x^{-1}(1)\ne y^{-1}(1)$ or $x^{-1}(0)\ne y^{-1}(0)$. Similarly, the condition (R) is an abstract version of the property of the dual $G=(P_\Lalg,E)$, which says that if $x^{-1}(1)\supset y^{-1}(1)$, then as 
$\langle y^{-1}(1),y^{-1}(0) \rangle$
is an MDFIP, we cannot have $x^{-1}(1) \cap  y^{-1}(0)=\emptyset$, i.e.
we cannot have 
 $(x,y)\in E$.  
The intuition behind the (Ti) condition is that when the lattice $\Lalg$ is distributive and the relation $E$ becomes the Priestley partial ordering, then (Ti) says that whenever $x\le y$, then there exists $z$ in the interval $[x,y]$. 

In~\cite{P3} we extended the classical Birkhoff dual representation between finite distributive lattices and finite posets from the 1930s into a dual representation between all finite lattices and all finite TiRS digraphs. 

In \cite{P6} we proved that  
TiRS digraphs satisfy the following ``weak antisymmetry'' condition (wA):

\begin{prop}[{\cite[Lemma 2.3]{P6}}]\label{lem:antisym}
Let $G=(V,E)$ be a TiRS digraph. Then 
\begin{itemize}
\item[(wA)] For all $x,y \in V$, if $xE \subseteq yE$ and $Ex \subseteq Ey$, then $x=y$.
\end{itemize}
\end{prop}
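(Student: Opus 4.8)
The plan is to derive $x=y$ directly from the two hypothesised inclusions, using only reflexivity of $E$ together with the separation axiom (S) and the refinement axiom (R); the condition (Ti) will play no role. First I would exploit reflexivity: since $(x,x)\in E$ we have $x\in xE$ and $x\in Ex$. Feeding $x$ through the two hypotheses then converts the inclusions into concrete edge memberships. Indeed, from $x\in xE\subseteq yE$ we read off $(y,x)\in E$, and from $x\in Ex\subseteq Ey$ we read off $(x,y)\in E$. Thus both directed edges between $x$ and $y$ are present, and this translation of set inclusions into edges via reflexivity is the crux of the whole argument.

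Next I would argue by contradiction, supposing $x\neq y$. By (S) at least one of the inequalities $xE\neq yE$ or $Ex\neq Ey$ must hold. In the first case, combining $xE\subseteq yE$ with $xE\neq yE$ yields the strict inclusion $xE\subset yE$, whereupon the first clause of (R) forces $(x,y)\notin E$, contradicting the edge $(x,y)\in E$ produced above. In the second case, $Ex\subseteq Ey$ together with $Ex\neq Ey$ gives $Ex\subset Ey$, and the second clause of (R) forces $(y,x)\notin E$, contradicting $(y,x)\in E$. Either branch of (S) delivers a contradiction, so $x=y$, which is exactly (wA).

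I do not anticipate any genuine obstacle here; the statement is essentially the antisymmetry that reflexivity squeezes out of (S) and (R), and the proof is correspondingly short. The only point requiring care is bookkeeping: one must keep the two directions of the edge distinct and pair each strict inclusion with its matching clause of (R), namely $xE\subset yE$ with $(x,y)\notin E$ and $Ex\subset Ey$ with $(y,x)\notin E$. Getting these pairings crossed would be the sole way to go wrong.
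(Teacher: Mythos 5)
Your proof is correct and is essentially the argument given for this result in the cited source \cite[Lemma~2.3]{P6} (the present paper only recalls the statement without reproving it): reflexivity turns the two inclusions into the edges $(y,x)\in E$ and $(x,y)\in E$, and then (S) forces one of the inclusions to be strict, so the matching clause of (R) contradicts the corresponding edge. Your pairing of $xE\subset yE$ with $(x,y)\notin E$ and of $Ex\subset Ey$ with $(y,x)\notin E$ is the right one, and (Ti) is indeed not needed.
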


Now we recall facts concerning general
digraphs $G =(X,E)$ that will be needed to present the Plo\v{s}\v{c}ica dual representation theorem from~\cite{Plos95}. Let $\twoT=(\{0,1\}, \le)$ be the two-element digraph with the relation $E$ being the partial order $\le$ on $\{0,1\}$. A~partial map $\varphi \colon X \to \twoT$ is said to preserve the relation $E$ if for $x, y \in \dom \varphi$, $\varphi(x)\leqslant \varphi(y)$ 
whenever 
$(x,y)\in E$. By $\mpe{G}{\twoT}$ we denote the 
set of all 
maximal partial $E$-preserving maps from $G$ to $\twoT$.

From the lemma below we have that for a digraph $G=(X,E)$ and $\varphi, \psi \in \mpe{G}{\twoT}$,
$$
\varphi^{-1}(1) \subseteq \psi^{-1}(1) \:\Longleftrightarrow\: \psi^{-1}(0) \subseteq \varphi^{-1}(0).
$$

\begin{lemma}
[cf.~{\cite[Lemma 1.3]{Plos95}}]
\label{lem:Plos1.3}
Let $G=(X,E)$ be a digraph and let us consider
$\varphi \in \mpe{G}{\twoT}$. Then
\begin{enumerate}[{\upshape (i)}]
\item $\varphi^{-1}(0)=\{\, x\in X \mid \text{there is no}\ y\in\varphi^{-1}
(1)\ \text{with}\ (y,x)\in E \,\}$;
\item $\varphi^{-1}(1)=\{\, x\in X \mid
\text{there is no}\ y\in\varphi^{-1}
(0)\ \text{with}\ (x,y)\in E\,\}$.
\end{enumerate}
\end{lemma}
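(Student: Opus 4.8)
The plan is to prove each of (i) and (ii) by establishing the two set-inclusions separately: the inclusions where $\varphi$ already takes a prescribed value follow immediately from $E$-preservation, while the reverse inclusions are exactly where \emph{maximality} is used, via an explicit extension argument, together with reflexivity of $E$. I would prove (i) in full and then obtain (ii) by the order-reversing duality described at the end.

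For (i), write $A=\varphi^{-1}(0)$ and $B=\{\,x\in X\mid\text{there is no }y\in\varphi^{-1}(1)\text{ with }(y,x)\in E\,\}$. The inclusion $A\subseteq B$ is pure preservation: if $x\in A$ and some $y\in\varphi^{-1}(1)$ satisfied $(y,x)\in E$, then $x,y\in\dom\varphi$ and $E$-preservation would force $1=\varphi(y)\le\varphi(x)=0$, which is absurd. For $B\subseteq A$ take $x\in B$. I first claim $x\in\dom\varphi$: if not, extend $\varphi$ to $\varphi'$ by setting $\varphi'(x)=0$. The only new edges to check are those incident with $x$; for $z\in\dom\varphi$ with $(x,z)\in E$ the requirement $\varphi'(x)=0\le\varphi'(z)$ holds trivially, and for $y\in\dom\varphi$ with $(y,x)\in E$ the requirement $\varphi'(y)\le\varphi'(x)=0$ just says $\varphi(y)=0$, which is precisely what membership of $x$ in $B$ guarantees (it forbids $\varphi(y)=1$). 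Thus $\varphi'$ is a proper $E$-preserving extension of $\varphi$, contradicting maximality, so $x\in\dom\varphi$. Finally, reflexivity gives $(x,x)\in E$, so if $\varphi(x)=1$ then $x$ itself would be a $y\in\varphi^{-1}(1)$ with $(y,x)\in E$, contradicting $x\in B$; hence $\varphi(x)=0$ and $x\in A$.

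The main obstacle, and the only place maximality enters, is this extension step: one must verify that assigning the value $0$ at a point outside $\dom\varphi$ can never break $E$-preservation, the key being that a freshly assigned $0$ is harmless along outgoing edges and can only clash along an incoming edge $(y,x)\in E$ from a point with value $1$, which is exactly the configuration excluded by $B$. Reflexivity of $E$ is genuinely needed to pin down the value once $x\in\dom\varphi$, consistent with the standing assumption on the digraphs of interest. Part (ii) then follows by the evident order-dual argument: applying (i) to the complemented map $x\mapsto 1-\varphi(x)$ on the reversed digraph $(X,E^{-1})$ (this map is $E^{-1}$-preserving and maximal iff $\varphi$ is $E$-preserving and maximal, and interchanges the two fibres) transforms the defining condition of (i) into that of (ii), with the symmetric extension-by-$1$ argument and the same reflexivity observation supplying the hard inclusion.
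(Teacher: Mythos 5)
The paper does not prove this lemma; it is recalled verbatim from Plo\v{s}\v{c}ica's paper (his Lemma~1.3), so there is no in-paper proof to compare against. Your argument is correct and is the standard one: preservation gives the easy inclusion, maximality is used exactly once, via the extension-by-$0$ argument, to force $x\in\dom\varphi$, reflexivity then pins down the value, and (ii) follows from (i) by the order-reversing duality $\varphi\mapsto 1-\varphi$ on $(X,E^{-1})$. Your observation that reflexivity of $E$ is genuinely needed is also right --- for a bare digraph the statement fails (e.g.\ a single vertex with no loop) --- and is consistent with the paper's setting, where all the relevant digraphs (duals of lattices, TiRS digraphs) are reflexive.
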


Hence, 
the reflexive and transitive binary relation $\leqslant$ defined on
$\mpe{G}{\twoT}$ by $\varphi\le\psi$ if and only if $\varphi^{-1}(1)\subseteq\psi^{-1}(1)$ is a partial order.

We remark that Plo\v{s}\v{c}ica's dual of a lattice $\Lalg$ was in~\cite{Plos95} given as the digraph with topology 
$\mathcal{D}({\Lalg})= (P_{\Lalg}, E, \T_{\Lalg})$ having as a base set the set 
all MPHs from $\Lalg$ into $\twoB$, where the topology $\T_{\Lalg}$ has as a subbasis of closed sets all sets of the form
\[
V_a=\{\,f \in \mph{\Lalg}{\twoB} \mid f(a)=0 \,\} \quad \text{and} \quad
W_a=\{\,f \in \mph{\Lalg}{\twoB} \mid f(a)=1 \,\}
\]
for all $a\in L$. This topology $\T_{\Lalg}$ is $T_1$ and is compact, and it turns out to be the same topology as used by Urquhart (\cite[Lemma 6]{U78}).

We will also need to recall facts concerning general digraphs $\mathbb{P}= (X,E,\T)$ with topology. (See also~\cite{P1}.) A map $\varphi \colon (X_1,E_1,\T_1)\to (X_2,E_2,\T_2)$ is 
a \emph{morphism} if it preserves the relation and is continuous as a map from  $(X_1,\T_1)$ to  $(X_2,\T_2)$.
A \emph{partial morphism} is a partial map $\varphi\colon  (X_1,E_1,\T_1)\to (X_2,E_2,\T_2)$
whose domain is a $\T_1$-closed subset of $X_1$ and the
restriction of $\varphi$ to its domain is a morphism.
Then by a \emph{maximal partial morphism} (MPM) is meant a partial morphism such that there is no partial morphism properly extending it. By $\mpm{\mathbb{P}}{\twoT_\T}$ we denote the set of MPMs from a digraph with topology  $\mathbb{P}= (X,E,\T)$  to the two-element
digraph $\twoT_\T$ with the discrete topology. 

Now we present the Plo\v s\v cica representation theorem for general lattices with bounds:

\begin{prop}[{\cite[Lemmas 1.2, 1.5, Theorem~1.7]{Plos95}, \cite[Proposition 2.5]{P6}}]\label{lem-eval}

Let\, $\Lalg$ be a  lattice with bounds and let\, $\mathcal{D}(\Lalg) = (P_{\Lalg}, E,
\T_{\Lalg})$  
be the dual
of\, $\Lalg$.  For $a\in L$, let the evaluation
map\, $e_a \colon \mathcal{D}(\Lalg) \to
\twoT_\T$
be defined by
\[
e_a(f)= \begin{cases}
 f(a) &\text{ $a\in \dom(f)$,}\\
  - &\text{ undefined otherwise.}
    \end{cases}
       \]
Then the following hold:
\begin{enumerate}[{\upshape (i)}]
\item The map\, $e_a$ is an element of
$\mpm{\mathcal{D}(\Lalg)}{\twoT_\T}$
for each  $a \in L$.
\item Every\,
$\varphi \in
\mpm{\mathcal{D}(\Lalg)}{\twoT_\T}$
is of the form $e_a$ for some $a\in L$.
\item The map\, $e_{\Lalg}: \Lalg \to
\mpm{\mathcal{D}(\Lalg)}{\twoT_\T}$
given by evaluation, $a \mapsto e_a$
{\upshape(}$a \in L${\upshape)}, is an
isomorphism of\, $\Lalg$ onto the lattice\,
$\mpm{\mathcal{D}(\Lalg)}{\twoT_\T}$,
ordered by the relation\, $\varphi\le\psi$ if and only if\, $\varphi^{-1}(1)\subseteq\psi^{-1}(1)$.
\end{enumerate}
\end{prop}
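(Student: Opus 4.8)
The plan is to establish the three assertions in order, the real content lying in (i) and (ii); throughout I would describe a partial $E$-preserving map by its two fibres and move between a fibre and its $E$-neighbours by means of Lemma~\ref{lem:Plos1.3}, translating everything into the language of MDFIPs via (E1) and (E2). I would also use repeatedly that the domain of an MPH is a $(0,1)$-sublattice, hence closed under the lattice operations.

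For (i), that $e_a$ is a partial morphism is routine: its domain $\dom e_a = V_a \cup W_a$ is $\T_\Lalg$-closed, its fibres $e_a^{-1}(0)=V_a$ and $e_a^{-1}(1)=W_a$ are closed so $e_a$ is continuous into the discrete $\twoT_\T$, and $E$-preservation is exactly (E1) read off at $a$. For maximality, suppose a partial morphism $\psi$ properly extends $e_a$ and pick $h\in\dom\psi$ with $a\notin\dom h$; writing $\langle F,I\rangle$ for the MDFIP of $h$, this means $a\notin F$ and $a\notin I$. Since $a\notin F$ and $F$ is up-closed, $F\cap{\downarrow}a=\varnothing$, so $\langle F,{\downarrow}a\rangle$ is a DFIP which extends (by Zorn's Lemma) to an MDFIP $\langle G_0,J_0\rangle$; its MPH $g_0$ satisfies $g_0(a)=0$ and $F\cap J_0\subseteq G_0\cap J_0=\varnothing$, whence $(h,g_0)\in E$ by (E2) and $g_0\in V_a\subseteq\psi^{-1}(0)$. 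Symmetrically, from $a\notin I$ one obtains an MPH $g_1$ with $g_1(a)=1$, $(g_1,h)\in E$ and $g_1\in W_a\subseteq\psi^{-1}(1)$. Now $E$-preservation of $\psi$ forces $1=\psi(g_1)\le\psi(h)\le\psi(g_0)=0$, which is absurd; hence no such $\psi$ exists.

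For (ii) my first move is a reduction. Given an MPM $\varphi$, set $A=\varphi^{-1}(1)$ and $B=\varphi^{-1}(0)$. If I can exhibit a single $a\in L$ with $A\subseteq W_a$ and $B\subseteq V_a$, then $e_a$ is a partial morphism (by (i)) whose domain $W_a\cup V_a$ contains $\dom\varphi=A\cup B$ and which agrees with $\varphi$ there, so $e_a$ extends $\varphi$; maximality of $\varphi$ then gives $\varphi=e_a$. Thus everything comes down to producing one lattice element sent to $1$ by every member of $A$ and to $0$ by every member of $B$. The natural attempt is a compactness argument: $E$-preservation together with (E2) shows that for $h\in A$ and $g\in B$ we have $(h,g)\notin E$, i.e. there is $c_{h,g}\in L$ with $h(c_{h,g})=1$ and $g(c_{h,g})=0$, so the basic closed sets $W_{c_{h,g}}$ and $V_{c_{h,g}}$ separate $h$ from $g$; closedness of $A,B$ inside the compact space $(P_\Lalg,\T_\Lalg)$ then yields finite subcovers which one would like to collapse, using finite joins and meets in $L$, to a single $a$. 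I expect this to be the main obstacle: because the morphisms are \emph{partial}, a finite join or meet of the separating elements need not lie in the domain of every $f\in A\cup B$, so the naive collapse fails, and one must exploit the saturation of the fibres furnished by Lemma~\ref{lem:Plos1.3} together with the maximality of $\varphi$ to keep the candidate element inside the relevant domains. This is precisely the step carried out in Plo\v s\v cica's Lemmas~1.5 and~1.7.

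Finally, (iii) assembles the pieces. The map $e_\Lalg\colon a\mapsto e_a$ lands in $\mpm{\mathcal{D}(\Lalg)}{\twoT_\T}$ by (i) and is onto by (ii). It is order-preserving because if $a\le b$ then, each $f^{-1}(1)$ being an up-closed filter, $W_a\subseteq W_b$, i.e. $e_a\le e_b$. For order-reflection, which also yields injectivity, I would argue contrapositively: if $a\nle b$ then ${\uparrow}a\cap{\downarrow}b=\varnothing$, so $\langle{\uparrow}a,{\downarrow}b\rangle$ is a DFIP extending to an MDFIP $\langle F,I\rangle$ with $a\in F$ and $b\in I$, whose MPH lies in $W_a\setminus W_b$; hence $e_a\nle e_b$. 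Thus $e_\Lalg$ is an order-isomorphism onto $\mpm{\mathcal{D}(\Lalg)}{\twoT_\T}$, and since both ordered sets are lattices an order-isomorphism automatically preserves the lattice operations, giving the desired lattice isomorphism.
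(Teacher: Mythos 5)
The paper does not actually prove this proposition: it is recalled from Plo\v{s}\v{c}ica \cite{Plos95} (Lemmas 1.2, 1.5 and Theorem 1.7) and from \cite{P6}, so there is no in-paper argument to compare against. Measured against that, your parts (i) and (iii) are a genuine gain: the maximality argument for $e_a$, sandwiching an alleged new point $h$ between the MDFIPs extending $\langle F,{\downarrow}a\rangle$ and $\langle{\uparrow}a,I\rangle$ and deriving $1\le\psi(h)\le 0$, is correct and complete, and so is the order-reflection step in (iii) via an MDFIP extending $\langle{\uparrow}a,{\downarrow}b\rangle$.

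Part (ii), however, is not proved, and this is a genuine gap. Your reduction --- it suffices to find one $a\in L$ with $\varphi^{-1}(1)\subseteq W_a$ and $\varphi^{-1}(0)\subseteq V_a$ --- is right, but the existence of such an $a$ is precisely the content of Plo\v{s}\v{c}ica's Theorem 1.7, and you explicitly defer it to the cited source; since (iii) needs surjectivity, that incompleteness propagates. Moreover, your diagnosis of where the naive argument breaks is not quite accurate: for \emph{finitely} many separating elements the collapse causes no domain trouble at all, because $f^{-1}(1)$ is a filter of $\Lalg$ and $g^{-1}(0)$ is an ideal of $\Lalg$, so $\bigwedge_{g}c_{f,g}$ automatically lies in $f^{-1}(1)$ and in each $g^{-1}(0)$, and dually for the join over $f$. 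The real obstruction is topological: the separating sets $V_c$ and $W_c$ are \emph{closed}, not open, so compactness of $(P_{\Lalg},\T_{\Lalg})$ cannot be applied to the covers $\varphi^{-1}(0)\subseteq\bigcup_c V_c$ to extract a finite subfamily. Overcoming this requires the finer analysis of the filter $\{\,a\mid \varphi^{-1}(1)\subseteq W_a\,\}$ and the ideal $\{\,a\mid \varphi^{-1}(0)\subseteq V_a\,\}$ attached to $\varphi$, combined with Lemma~\ref{lem:Plos1.3} and the maximality of $\varphi$; until that step is carried out, your proof of (ii) is a statement of intent rather than an argument.
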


The next two lemmas will be used in Section~\ref{sec:orthospaces}.

\begin{lemma}\label{lem:dual}
The dual $\mathcal{D}(\Lalg) = (P_{\Lalg},E,\T_{\Lalg})$ of  a lattice $\Lalg$ with bounds satisfies the following conditions for any $x,y \in P_{\Lalg}$:
\begin{itemize}
    \item[(i)] $xE \subseteq yE$ if and only if $y^{-1}(1) \subseteq x^{-1}(1)$;
    \item[(ii)] $Ex \subseteq Ey$ if and only if $y^{-1}(0) \subseteq x^{-1}(0)$.
\end{itemize}
\end{lemma}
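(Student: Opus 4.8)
The plan is to reduce both equivalences to the single observation that, by (E1), for any $f,g\in P_{\Lalg}$ one has
\[
(f,g)\in E \iff f^{-1}(1)\cap g^{-1}(0)=\emptyset ,
\]
because the only way to violate $f(a)\le g(a)$ at some $a\in\dom f\cap\dom g$ is to have $f(a)=1$ and $g(a)=0$, and any such $a$ automatically lies in $\dom f\cap\dom g$. Once $E$ is recast this way, (i) and (ii) become assertions purely about the filters $x^{-1}(1),y^{-1}(1)$ and the ideals $x^{-1}(0),y^{-1}(0)$ attached to $x,y\in P_{\Lalg}$.

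The forward-easy implications then follow by monotonicity. For (i), assume $y^{-1}(1)\subseteq x^{-1}(1)$ and take $z\in xE$; then $x^{-1}(1)\cap z^{-1}(0)=\emptyset$, so a fortiori $y^{-1}(1)\cap z^{-1}(0)=\emptyset$, whence $z\in yE$. Part (ii) is symmetric: if $y^{-1}(0)\subseteq x^{-1}(0)$ and $z\in Ex$, then $z^{-1}(1)\cap x^{-1}(0)=\emptyset$ gives $z^{-1}(1)\cap y^{-1}(0)=\emptyset$, so $z\in Ey$.

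The substance lies in the converse implications, which I would prove by contraposition, constructing a separating witness. For (i), suppose $y^{-1}(1)\not\subseteq x^{-1}(1)$ and fix $a\in y^{-1}(1)\setminus x^{-1}(1)$. Since $x^{-1}(1)$ is a filter not containing $a$, upward-closure shows the principal ideal ${\downarrow}a$ is disjoint from $x^{-1}(1)$, so $\langle x^{-1}(1),{\downarrow}a\rangle$ is a DFIP; extending it to an MDFIP $\langle G,J\rangle$ (by the standard Zorn's lemma argument for DFIPs) and passing through the correspondence between MDFIPs and MPHs yields $z\in P_{\Lalg}$ with $z^{-1}(1)=G\supseteq x^{-1}(1)$ and $z^{-1}(0)=J\supseteq{\downarrow}a\ni a$. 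Then $x^{-1}(1)\cap z^{-1}(0)\subseteq G\cap J=\emptyset$ gives $z\in xE$, while $a\in y^{-1}(1)\cap z^{-1}(0)$ gives $z\notin yE$, so $xE\not\subseteq yE$. The argument for (ii) is exactly dual: from $a\in y^{-1}(0)\setminus x^{-1}(0)$, downward-closure of the ideal $x^{-1}(0)$ makes ${\uparrow}a$ disjoint from $x^{-1}(0)$; extending $\langle{\uparrow}a,x^{-1}(0)\rangle$ to an MDFIP produces a witness $z$ with $z^{-1}(1)\ni a$ and $z^{-1}(0)\supseteq x^{-1}(0)$, giving $z\in Ex\setminus Ey$.

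I expect the only real obstacle to be this witness construction: one must check that the chosen principal ideal (resp.\ filter) is genuinely disjoint from $x^{-1}(1)$ (resp.\ $x^{-1}(0)$), which rests on the up/down-closure of these sets, and one must invoke the extension of a DFIP to an MDFIP together with the MDFIP--MPH correspondence. Everything else is bookkeeping with the reformulation of $E$ above.
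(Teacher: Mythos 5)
Your proof is correct, and for the substantive direction it takes a genuinely different route from the paper's. The easy implications (from $y^{-1}(1)\subseteq x^{-1}(1)$ to $xE\subseteq yE$, and dually) coincide with the paper's argument, both resting on the reformulation $(f,g)\in E\iff f^{-1}(1)\cap g^{-1}(0)=\emptyset$. For the converse the paper argues directly: reflexivity of $E$ turns $xE\subseteq yE$ into $(y,x)\in E$, from which it immediately asserts that $y(a)=1$ forces $x(a)=1$. You instead argue by contraposition, extending the DFIP $\langle x^{-1}(1),{\downarrow}a\rangle$ to an MDFIP via Zorn's lemma to manufacture a witness $z\in xE\setminus yE$. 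Your route is longer but self-contained, and it in fact supplies exactly what the paper's one-line step leaves implicit: $(y,x)\in E$ together with $y(a)=1$ only yields $a\notin x^{-1}(0)$, not $a\in x^{-1}(1)$ (in $\mathbf{M}_3$ with atoms $p,q,r$, take $y=\MDFIP{p}{q}$ and $x=\MDFIP{q}{r}$; then $(y,x)\in E$ and $y(p)=1$ while $p\notin\dom x$), so some appeal to the maximality of the MDFIP $x$ --- precisely your extension argument, or equivalently Lemma~\ref{lem:S} --- is needed to close the gap. The only ingredient you invoke beyond the paper's stated toolkit is the standard Zorn-based extension of a DFIP to an MDFIP, which the paper itself uses elsewhere (e.g.\ in the proof of condition (O)), so nothing is missing from your argument.
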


\begin{proof}
We prove (i) while (ii) can be shown analogously. If $xE \subseteq yE$ and $y(a)=1$ for $a\in L$, then using reflexivity of $E$ we get $x\in yE$, whence $(y,x)\in E$, thus $x(a)=1$. Conversely, if $y^{-1}(1) \subseteq x^{-1}(1)$ and $z\in xE$ for $z\in P_{\Lalg}$, then $(x,z)\in E$, i.e. $x^{-1}(1) \cap z^{-1}(0) =\emptyset$. Hence $y^{-1}(1) \cap z^{-1}(1) =\emptyset$, thus $(y,z)\in E$, i.e. $z\in yE$ as required.      
\end{proof}

The lemma below taken from \cite{P6} is usually applied to situations where $S$ is a filter or an ideal. 

\begin{lemma}[{\cite[Lemma 4.5]{P6}}]\label{lem:S}
Let $\Lalg$ be a lattice and $S\subseteq L$. 

\begin{itemize}
\item[(i)] If $I$ is an ideal 
of $\Lalg$ maximal w.r.t. being disjoint from the set $S$, then for every element $y\notin I$ there exists an element $x\in I$ such that $x\vee y \in S$.
\item[(ii)] If $F$ is a filter of 
$\Lalg$ maximal w.r.t. being disjoint from the set $S$, then for every element $y\notin F$ there exists an element $x\in F$ such that $x\wedge y \in S$.
\end{itemize}
\end{lemma}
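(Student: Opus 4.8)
The plan is to prove part (i) by a maximality argument and to obtain (ii) by order-dual reasoning. Fix an ideal $I$ that is maximal among the ideals of $\Lalg$ disjoint from $S$, and let $y \notin I$. Since $y$ does not already lie in $I$, adjoining it produces a strictly larger ideal, namely the ideal $I'$ generated by $I \cup \{y\}$, which properly contains $I$. By the maximality of $I$ with respect to being disjoint from $S$, this strictly larger ideal can no longer avoid $S$, so $I' \cap S \ne \varnothing$.

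The next step is to describe $I'$ concretely. In any lattice the ideal generated by $I \cup \{y\}$ is
$$
I' = \{\, z \in L \mid z \le x \vee y \text{ for some } x \in I \,\};
$$
one checks routinely that this set contains $I$ and $y$, is a down-set, and is closed under binary joins (if $z_1 \le x_1 \vee y$ and $z_2 \le x_2 \vee y$ with $x_1, x_2 \in I$, then $z_1 \vee z_2 \le (x_1 \vee x_2) \vee y$ and $x_1 \vee x_2 \in I$), and that it is the smallest ideal with these properties. Combining this description with $I' \cap S \ne \varnothing$ yields an element $s \in S$ together with some $x \in I$ such that $s \le x \vee y$.

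It remains to pass from $s \le x \vee y$ to the desired conclusion $x \vee y \in S$, and this is where the structure of $S$ enters and where I expect the only real difficulty to lie: when $S$ is an up-set, in particular when $S$ is a filter, which is the situation in which the lemma is applied (see the remark preceding the statement), the inequality $s \le x \vee y$ with $s \in S$ forces $x \vee y \in S$, and the chosen $x$ lies in $I$ as required. This last implication is the delicate point, since for a $S$ lacking closure properties it can fail; the upward closure of $S$ is exactly what makes the argument go through. Part (ii) is then proved by the dual construction: replace $I$ by the filter $F$, form the filter generated by $F \cup \{y\}$, described as $\{\, z \mid z \ge x \wedge y \text{ for some } x \in F \,\}$, and use that $S$ is a down-set to conclude $x \wedge y \in S$.
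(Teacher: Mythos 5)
Your argument is the standard one and, as far as this paper is concerned, there is nothing to compare it against: Lemma~\ref{lem:S} is only quoted here from \cite{P6} and not reproved. The classical proof is exactly what you give --- pass to the ideal generated by $I\cup\{y\}$, which is $\{\,z\in L\mid z\le x\vee y\text{ for some }x\in I\,\}$, invoke maximality of $I$ to find $s\in S$ with $s\le x\vee y$ for some $x\in I$, and then use upward closure of $S$ to conclude $x\vee y\in S$; part (ii) is the order dual.

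The caveat you flag at the end deserves to be made explicit, because it is not a weakness of your proof but a genuine defect of the statement as literally written: for an arbitrary subset $S$ the lemma is false. Take the four-element chain $0<a<b<1$ and $S=\{a\}$. Then $I=\{0\}$ is an ideal maximal with respect to being disjoint from $S$ (every larger ideal contains $a$), but for $y=b\notin I$ the only candidate is $x=0$, and $0\vee b=b\notin S$. So part (i) requires $S$ to be an up-set and part (ii) requires $S$ to be a down-set, which is precisely the hypothesis your last step uses. In the only application made in this paper (the maximality argument in the proof of Lemma~\ref{lem:gmap}), the set $S=f^{-1}(1)$ is a filter, so your proof covers everything that is actually needed; the same is true of the paper's remark that the lemma ``is usually applied to situations where $S$ is a filter or an ideal.''
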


We finally recall some facts from~\cite{P2} (see also \cite{GW99} and \cite{ACthesis}).  
Let $G=(X,E)$ be a digraph. By a \emph{context} associated to the complement $E^\complement$ of the digraph relation $E$ is meant the triple $\mathbb{K}(G) := (X,X,E^\complement)$, where $E^\complement$ is given by $E^\complement = (X\times X){\setminus} E$. Then a Galois connection is defined via so-called \emph{polars} such that the maps
$$E^\complement_\triangleright : (\powerset(X),\subseteq) \to (\powerset(X),\supseteq)\quad \text{and}\quad E^\complement_\triangleleft : (\powerset(X),\supseteq) \to (\powerset(X),\subseteq)$$
are given by
\begin{align}
E^\complement_\triangleright (Y) &=\{\, x \in X \mid (\forall\, y \in Y) (y,x) \notin E \,\},\notag\\
E^\complement_\triangleleft (Y) &=\{\, z \in X \mid (\forall\, y \in Y) (z,y) \notin E \,\}.\notag
\end{align}
Then the 
\emph{concept lattice} $\mathrm{CL}(\mathbb{K}(G))$  of the
context $\mathbb{K}(G)= (X,X,E^\complement)$ is a complete lattice, ordered by inclusion, given by
$ \mathrm{CL}(\mathbb{K}(G)) = \{\, Y \subseteq X \mid
(E^\complement_\triangleleft \circ E^\complement_\triangleright) (Y)= Y \,\}$.

\section{Simplifying Plo\v{s}\v{c}ica spaces}\label{sec:Plospaces}

At the beginning of this section we will show that our definition of 
Plo\v{s}\v{c}ica spaces from \cite{P6} can be simplified such that it gives a straight and natural generalisation of the  total order disconnectedness of Priestley spaces (cf. \cite{Pr70}, \cite{Pr72}).

We start with recalling our definition of the Plo\v{s}\v{c}ica spaces from \cite{P6}.

\begin{definition}[{\cite[Definition 3.1]{P6}}]\label{def:ploscicaspaces}
A \emph{Plo\v{s}\v{c}ica space} is a structure
$\mathbb{P}=(X,E,\T)$
such that
\begin{enumerate}
 \item
 $(X,E)$ is a TiRS digraph and $(X,\T)$ is a compact topological space.
 \item $\mathbb{P}$ is doubly-disconnected, i.e. for any $x, y \in X$:
  \begin{enumerate}[(a)]
 \item If $yE \not\subseteq xE $ then there exists $\varphi \in
  \mpm{\mathbb{P}}{\twoT_\T}$  such that $\varphi(x)=1$ and $\varphi(y)\neq 1$.
 \item If $Ey \not\subseteq Ex$, then there exists $\varphi\in
\mpm{\mathbb{P}}{\twoT_\T}$
 such that $\varphi(x)=0$ and $\varphi(y) \neq 0$.
 \end{enumerate}
\item For any $\varphi$, $\psi \in\mpm{\mathbb{P}}{\twoT_\T}$,
the sets
$$ E_{\triangleleft}^{\complement}(\varphi^{-1}(0) \cap \psi^{-1}(0)) \quad \text{ and } \quad E_{\triangleright}^{\complement}(\varphi^{-1}(1)\cap \psi^{-1}(1))$$
are closed.
 \item The family
 $$ \left\{\, X{\setminus}\varphi^{-1}(1)\mid \varphi \in
\mpm{\mathbb{P}}{\twoT_\T}
\,\right\} \cup \left\{\, X{\setminus}\varphi^{-1}(0) \mid \varphi\in
\mpm{\mathbb{P}}{\twoT_\T}
\,\right\} $$
 forms a subbase for $\T$.
\end{enumerate}
\end{definition}

Let us now recall that the  total order disconnectedness of Priestley spaces 
means that for any two points $x \ne y$ there exists a clopen up-set
(or, equivalently, down-set) that separates them. In \cite{P6}
we interpreted the condition of the 
double disconnectedness 
(2) as a generalisation of the total order disconnectedness of Priestley spaces in the following sense: As from  Lemma~\ref{lem:antisym} we have in TiRS digraphs that  $x\ne y$ yields $yE \nsubseteq xE$ or $Ey\nsubseteq Ex$, the 
double disconnectedness (2) can be thought of as saying that $x \ne y$ implies there is an MPM
$\varphi \in \mpm{\mathbb{P}}{\twoT_\T}$
for which $\varphi(x)=1$ and $\varphi(y)\neq 1$ or
there is an MPM $\psi \in \mpm{\mathbb{P}}{\twoT_\T}$
for which $\psi(x)=0$ and $\psi(y) \neq 0$.

To simplify the condition (2) of the 
double disconnectedness
in Definition~\ref{def:ploscicaspaces} above and to present it as a more natural generalisation of the total order disconnectedness of Priestley spaces, we 
recall the following result 
from~\cite{P6}. We now refer to the condition in the result as the \emph{(TED) condition}, standing for \emph{total edge-disconnectedness}.

\begin{prop}[{\cite[Corollary 3.11]{P6}}]\label{P6:3.11}
Let $\mathbb{P}=(X,E,\T)$ be a Plo\v{s}\v{c}ica space. The following holds:

\noindent 
{\upshape (TED)} \ For any $x,y\in X$ such that $(x,y)\notin E$ there exists $\varphi\in\mpm{\mathbb{P}}{\twoT_\T}$ such that $\varphi(x)=1$ and $\varphi(y)=0$.
\end{prop}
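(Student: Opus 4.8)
The plan is to manufacture from the non-edge $(x,y)\notin E$ the \emph{smallest} concept containing $x$, to observe that its $\triangleright$-polar already contains $y$, and then to realise the induced $\{0,1\}$-labelling as an element of $\mpm{\mathbb{P}}{\twoT_\T}$. Concretely, I would fix $x,y\in X$ with $(x,y)\notin E$ and set $C:=\ECL(\ECR(\{x\}))$, the smallest fixed point of $\ECL\circ\ECR$ (hence the smallest concept) containing $x$. Unwinding the definitions of the polars gives $C=\{\,w\in X\mid wE\subseteq xE\,\}$, so in particular $x\in C$. I then claim $y\in\ECR(C)$, i.e.\ $(w,y)\notin E$ for every $w\in C$: indeed, if $w\in C$ and $(w,y)\in E$, then $y\in wE\subseteq xE$, forcing $(x,y)\in E$, contrary to hypothesis. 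Thus $C$ is a concept with $x\in C$ and $C\cap Ey=\varnothing$, equivalently $y\in\ECR(C)$.

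Next I would write down the candidate labelling $\varphi$ with $\varphi^{-1}(1)=C$, $\varphi^{-1}(0)=\ECR(C)$ and $\varphi$ undefined elsewhere. The two sets are disjoint, since a common point $z$ would satisfy $(z,z)\notin E$, contradicting reflexivity of $E$; and $\varphi$ preserves $E$, because the only possible failure would be a pair $u\in C$, $v\in\ECR(C)$ with $(u,v)\in E$, which is excluded by the very definition of $\ECR(C)$. Since $C$ is Galois-closed we have $\ECL(\ECR(C))=C$, so by Lemma~\ref{lem:Plos1.3} (and its evident converse) no further point can be given either value: $\varphi$ is already maximal among partial $E$-preserving maps, and by construction $\varphi(x)=1$ and $\varphi(y)=0$.

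The main obstacle is the remaining \emph{topological} step, namely upgrading this purely combinatorial map to a genuine maximal partial morphism of the space whose value sets are $\T$-closed and which still sends $x$ to $1$ and $y$ to $0$. Here the axioms of Definition~\ref{def:ploscicaspaces} must be used. For each $w$ with $wE\not\subseteq xE$ (so $w\notin C$), the double disconnectedness~(2)(a) provides $\varphi_w\in\mpm{\mathbb{P}}{\twoT_\T}$ with $\varphi_w(x)=1$ and $\varphi_w(w)\neq1$; each $\varphi_w^{-1}(1)$ is a $\T$-closed concept (its complement is subbasic open by~(4)) that contains $x$, hence contains $C$, and omits $w$, so that $\bigcap_{w\notin C}\varphi_w^{-1}(1)=C$. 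It then remains to realise this intersection as the $1$-set of a single MPM, and this is precisely where compactness of $(X,\T)$ together with the closure condition~(3) and the subbasis condition~(4) are invoked, via a Zorn/compactness argument over partial morphisms with closed domain. Passing to this limiting MPM, rather than the elementary reduction of the first two paragraphs, is the genuinely hard part; once it is in hand its $0$-set is $\ECR(C)$, which contains $y$, and the proof is complete.
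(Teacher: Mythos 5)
Your combinatorial reduction is correct as far as it goes: $C=\ECL(\ECR(\{x\}))=\{\,w\in X\mid wE\subseteq xE\,\}$ contains $x$, one has $y\in\ECR(C)$, and the partial map with value sets $C$ and $\ECR(C)$ is indeed maximal among partial $E$-preserving maps. The genuine gap is that this object need not belong to $\mpm{\mathbb{P}}{\twoT_\T}$ at all, and the topological repair you sketch cannot succeed, because the set $C$ is in general not the $1$-set of \emph{any} maximal partial morphism. To see this, test your construction on the dual space of a lattice $\Lalg$ (which, by Theorem~\ref{thm:graph-homeomorphic}, is the general case up to digraph-homeomorphism): writing $x=\langle F,I\rangle$, Lemma~\ref{lem:dual} gives $C=\{\,w\mid F\subseteq w^{-1}(1)\,\}=\bigcap_{a\in F}W_a$, whereas by Proposition~\ref{lem-eval}(ii) every element of $\mpm{\mathcal{D}(\Lalg)}{\twoT_\T}$ is an evaluation $e_a$, whose $1$-set is a single $W_a$. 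Hence $C$ is the $1$-set of an MPM only when the filter $F$ is principal. Your final step --- realising $\bigcap_{w\notin C}\varphi_w^{-1}(1)=C$ as the $1$-set of one MPM via a Zorn/compactness argument --- is therefore asking for something false in general; being ``maximal among partial $E$-preserving maps'' does not make your $\varphi$ a partial morphism (its value sets need not be $\T$-closed), and no limiting process inside $\mpm{\mathbb{P}}{\twoT_\T}$ can reach $C$.

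The missing idea is that you do not need an MPM whose $1$-set is exactly the minimal concept $C$; you only need one whose $1$-set contains $x$ and whose $0$-set contains $y$, and for this the second-dual theorem does all the work. This is how the cited proof in \cite{P6} goes: by Theorem~\ref{thm:graph-homeomorphic} the map $x\mapsto\varepsilon_x$ is a digraph-homeomorphism of $\mathbb{P}$ onto $\mathcal{D}(\mathcal{L}(\mathbb{P}))$, so $(x,y)\notin E$ yields $(\varepsilon_x,\varepsilon_y)\notin E$, and by (E1) there exists $\varphi\in\dom\varepsilon_x\cap\dom\varepsilon_y$ with $\varepsilon_x(\varphi)=1$ and $\varepsilon_y(\varphi)=0$, i.e.\ an MPM $\varphi\in\mpm{\mathbb{P}}{\twoT_\T}$ with $\varphi(x)=1$ and $\varphi(y)=0$ (this is exactly the argument of Proposition~\ref{proof:TED} transported along the homeomorphism). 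All of the compactness and closure axioms you were trying to invoke directly are already packaged inside Theorem~\ref{thm:graph-homeomorphic}; a direct axiomatic proof along your lines would have to reprove that theorem, not merely cite double disconnectedness.
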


We will prove that the new condition (TED) is equivalent to the 
somewhat
more complicated condition (2) 
in Definition~\ref{def:ploscicaspaces}. It is important to notice that the simplified condition of 
total edge-disconnectedness (TED) is exactly the total order disconnectedness if one replaces the Plo\v{s}\v{c}ica digraph relation $E$ with the Priestley order relation 
$\leqslant$ 
and an MPM with an order-preserving continuous function.

In the following two results the conditions (1)--(4) refer to Definition~\ref{def:ploscicaspaces}.

\begin{lemma}\label{impl:TED}
Let $\mathbb{P}=(X,E,\T)$ be a digraph with topology.  
Then the condition 
{\upshape (TED)}
implies 
condition 
{\upshape (2)}. 
\end{lemma}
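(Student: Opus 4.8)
The plan is to verify the two clauses (a) and (b) of condition (2) separately, each by the same three-step strategy: unpack the failed inclusion into a witnessing non-edge, apply (TED) to that non-edge to obtain a suitable MPM, and then read off the required value at $y$ from the fact that every element of $\mpm{\mathbb{P}}{\twoT_\T}$ preserves $E$ on its domain. Nothing beyond the bare structure of a digraph with topology is needed, which matches the hypothesis.

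For clause (a), I would suppose $yE\nsubseteq xE$, so that there is some $z\in yE$ with $z\notin xE$, that is, $(y,z)\in E$ but $(x,z)\notin E$. Applying (TED) to the pair $(x,z)$ yields $\varphi\in\mpm{\mathbb{P}}{\twoT_\T}$ with $\varphi(x)=1$ and $\varphi(z)=0$; in particular $z\in\dom\varphi$. I then claim $\varphi(y)\neq 1$. If $y\notin\dom\varphi$ this is immediate, since $\varphi(y)$ is undefined. If $y\in\dom\varphi$, then from $(y,z)\in E$ and the $E$-preservation of $\varphi$ we get $\varphi(y)\le\varphi(z)=0$, so $\varphi(y)=0\neq 1$. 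Either way $\varphi(x)=1$ and $\varphi(y)\neq 1$, which is exactly (a).

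Clause (b) I would handle by the dual argument. If $Ey\nsubseteq Ex$, pick $z\in Ey$ with $z\notin Ex$, so $(z,y)\in E$ but $(z,x)\notin E$. Now (TED) applied to the pair $(z,x)$ delivers $\varphi\in\mpm{\mathbb{P}}{\twoT_\T}$ with $\varphi(z)=1$ and $\varphi(x)=0$, whence $z\in\dom\varphi$. To see $\varphi(y)\neq 0$: if $y\notin\dom\varphi$ this is clear, and if $y\in\dom\varphi$ then $(z,y)\in E$ together with $E$-preservation forces $1=\varphi(z)\le\varphi(y)$, so $\varphi(y)=1\neq 0$. Thus $\varphi(x)=0$ and $\varphi(y)\neq 0$, giving (b).

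The argument is essentially a direct deduction, so I do not expect a genuine obstacle; the only points requiring care are keeping the two directions of $E$-preservation straight (using $\varphi(z)=0$ to bound $\varphi(y)$ from above in (a), and $\varphi(z)=1$ to bound it from below in (b)), and handling the case where $y$ lies outside $\dom\varphi$, which is harmless precisely because an undefined value is neither $1$ nor $0$. It is worth noting that the proof uses only that $\mathbb{P}$ is a digraph with topology, not the full force of Definition~\ref{def:ploscicaspaces}, consistent with the stated hypothesis.
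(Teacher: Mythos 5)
Your proof is correct and follows essentially the same route as the paper's: extract a witnessing non-edge from the failed inclusion, apply (TED) to it, and use $E$-preservation of the resulting MPM to rule out the forbidden value at $y$. The only difference is that you spell out the (harmless) case $y\notin\dom\varphi$ explicitly, which the paper leaves implicit.
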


\begin{proof} If $yE \nsubseteq xE$ then there exists $z$ such that $(y,z) \in E$ but $(x,z) \notin E$. Since $(x,z) \notin E$, by (TED) there exists an MPM $\varphi$ such that $\varphi(x)=1$  and $\varphi(z)=0$.  Since $(y,z)\in E$ and $ \varphi(z)=0$, we have that $\varphi(y)\neq 1$. This gives us 2(a).

To show 2(b), if $Ey \nsubseteq Ex$ then there exists $z$ such that $(z,y) \in E$ and $(z,x) \notin E$. By (TED) there exists an MPM $\varphi$ such that $\varphi(z)=1$ and $\varphi(x)=0$. Since $(z,y) \in E$ and $\varphi(z)=1$, we get that $\varphi(y)\neq 0$. 
\end{proof}

The next result immediately follows from the previous lemma and Proposition~\ref{P6:3.11}: 

\begin{prop}\label{equiv:TED}
Let $\mathbb{P}=(X,E,\T)$ be a digraph with topology satisfying the conditions {\upshape (1)}, {\upshape (3)} and {\upshape (4)}. Then 
condition {\upshape (2)} 
is equivalent to the condition {\upshape (TED)}.
\end{prop}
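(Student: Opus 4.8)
The plan is to prove Proposition~\ref{equiv:TED} by establishing the equivalence in two directions, one of which is already essentially done. The forward direction, namely that (TED) implies condition (2), is exactly the content of Lemma~\ref{impl:TED}, which has just been proved and which crucially does not require any of the hypotheses (1), (3), (4) — it holds for an arbitrary digraph with topology. So for this half I would simply invoke Lemma~\ref{impl:TED} directly, observing that it gives the implication (TED)$\,\Rightarrow\,$(2) with no further work.

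For the reverse direction, that condition (2) implies (TED), the key observation is that under the standing assumptions (1), (3), (4) together with (2), the structure $\mathbb{P}$ is by definition a Plo\v{s}\v{c}ica space (Definition~\ref{def:ploscicaspaces}). Once I know $\mathbb{P}$ is a genuine Plo\v{s}\v{c}ica space, Proposition~\ref{P6:3.11} applies verbatim and yields (TED). Thus the argument is: assume (1), (3), (4) and (2); by Definition~\ref{def:ploscicaspaces} this is precisely a Plo\v{s}\v{c}ica space; hence by Proposition~\ref{P6:3.11} the condition (TED) holds. This completes the reverse implication and, combined with Lemma~\ref{impl:TED}, the claimed equivalence.

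I would write the whole proof compactly as follows:
\begin{proof}
By Lemma~\ref{impl:TED}, the condition {\upshape (TED)} implies condition {\upshape (2)}, without using the hypotheses {\upshape (1)}, {\upshape (3)}, {\upshape (4)}. Conversely, suppose condition {\upshape (2)} holds. Then $\mathbb{P}=(X,E,\T)$ satisfies all four conditions {\upshape (1)}--{\upshape (4)} of Definition~\ref{def:ploscicaspaces}, so it is a Plo\v{s}\v{c}ica space. Hence, by Proposition~\ref{P6:3.11}, the condition {\upshape (TED)} holds. Therefore, under {\upshape (1)}, {\upshape (3)} and {\upshape (4)}, conditions {\upshape (2)} and {\upshape (TED)} are equivalent.
\end{proof}

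The proof is essentially immediate given the preceding results, so there is no genuine obstacle to anticipate; the only point requiring care is to confirm that Lemma~\ref{impl:TED} indeed holds for an arbitrary digraph with topology (so that the forward implication needs none of (1), (3), (4)), and that the hypotheses of Proposition~\ref{equiv:TED} are exactly what is needed to recognise $\mathbb{P}$ as a Plo\v{s}\v{c}ica space so that Proposition~\ref{P6:3.11} can be applied. Both of these are transparent from the statements as given, which is presumably why the authors describe the result as an immediate consequence.
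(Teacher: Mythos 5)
Your proof is correct and is exactly the paper's argument: the authors state that the result ``immediately follows from the previous lemma and Proposition~\ref{P6:3.11}'', i.e.\ (TED)$\Rightarrow$(2) is Lemma~\ref{impl:TED}, and (2) together with (1), (3), (4) makes $\mathbb{P}$ a Plo\v{s}\v{c}ica space so that Proposition~\ref{P6:3.11} gives (TED). You have simply written out the same two-step deduction in full.
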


Let us recall that the Plo\v{s}\v{c}ica dual of a lattice $\Lalg$ with bounds is the digraph with topology 
$\mathcal{D}({\Lalg})= (P_{\Lalg}, E, \T_{\Lalg})$. 

In \cite[Lemma~3.2]{P6} we showed the 
double disconnectedness
of this structure by proving the condition (2) of Definition~\ref{def:ploscicaspaces}. Now with the new simplified condition for the total edge-disconnectedness,  
the proof can be seen as 
straightforward: 

\begin{prop}\label{proof:TED}
Let $\Lalg$ be a lattice with bounds and $\mathcal{D}({\Lalg})= (P_{\Lalg}, E, \T_{\Lalg})$ be its Plo\v{s}\v{c}ica dual. Then $\mathcal{D}({\Lalg})$ is totally edge-disconnected. 
\end{prop}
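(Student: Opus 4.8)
The plan is to prove (TED) directly from the definition of the Plo\v{s}\v{c}ica relation $E$ on $P_{\Lalg}$ and the representation theorem (Proposition~\ref{lem-eval}), using the evaluation maps $e_a$ as the witnessing MPMs. Suppose $x,y \in P_{\Lalg}$ with $(x,y)\notin E$. By the defining condition (E1) for the relation $E$ on MPHs, $(x,y)\notin E$ means there exists some element $a \in \dom x \cap \dom y$ with $x(a)=1$ and $y(a)=0$. The natural candidate for the separating MPM is then the evaluation map $e_a$, which by Proposition~\ref{lem-eval}(i) is indeed an element of $\mpm{\mathcal{D}(\Lalg)}{\twoT_\T}$.

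First I would spell out exactly what $(x,y)\notin E$ gives us. Since $E$ is reflexive and is defined by (E1), its failure is precisely the existence of a common domain point $a$ on which the values go the wrong way, i.e. $x(a)=1$ while $y(a)=0$; I would state this explicitly as the opening move. Then, by the definition of the evaluation map $e_a$, we have $e_a(x)=x(a)=1$ and $e_a(y)=y(a)=0$, since $a$ lies in both $\dom x$ and $\dom y$ so both values are defined. This is exactly the conclusion required by (TED), with $\varphi := e_a$.

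The only point needing care is that the chosen $\varphi$ must genuinely be a \emph{maximal} partial morphism, not merely a partial $E$-preserving map; but this is guaranteed for free by Proposition~\ref{lem-eval}(i), which asserts $e_a \in \mpm{\mathcal{D}(\Lalg)}{\twoT_\T}$ for every $a\in L$. Thus no separate maximality argument is needed, and the heavy lifting has already been done in establishing the representation theorem. I expect the main (and only real) obstacle to be the first step: correctly reading off, from the negation of (E1), that the relevant $a$ lies in $\dom x \cap \dom y$ and separates the values as $x(a)=1$, $y(a)=0$ (rather than the reverse), so that the evaluation map points the right way. Everything after that is an immediate substitution into the definition of $e_a$.
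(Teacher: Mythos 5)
Your proposal is correct and follows exactly the paper's own argument: negate (E1) to obtain $a \in \dom x \cap \dom y$ with $x(a)=1$ and $y(a)=0$, then take $\varphi = e_a$, which is an MPM by Proposition~\ref{lem-eval}(i). No differences worth noting.
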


\begin{proof}
If $f, g\in P_{\Lalg}$ and $(f,g)\notin E$, then by (E1) 
in Section~\ref{sec:prelim} there is  $a \in \dom f \cap \dom g$ with $f(a)=1$ and $g(a)=0$. By Proposition~\ref{lem-eval} for $\varphi =e_a$ as an MPM from the set $\mpm{\mathcal{D}(\Lalg)}{\twoT_\T}$ we immediately get 
$\varphi(f) =e_a(f)= f(a) = 1$ and $\varphi(g) =e_a(g)= g(a) = 0$ showing the condition (TED).    
\end{proof}

We now 
mention other results from~\cite{P6} concerning 
Plo\v{s}\v{c}ica spaces that will be needed in our dual representation theorems for 
ortholattices in Section~\ref{sec:dual repr}.

For a Plo\v{s}\v{c}ica space $\mathbb{P}=(X,E,\T)$ we define the ordering $\leqslant$ on $\mpm{\mathbb{P}}{\twoT_\T}$ by
$$\varphi \leqslant \psi \quad \Longleftrightarrow \quad
\varphi^{-1}(1) \subseteq \psi^{-1}(1)
\quad \Longleftrightarrow \quad
\psi^{-1}(0) \subseteq \varphi^{-1}(0).
$$
We proved in~\cite[Proposition~3.7]{P6} that $\mathcal{L}(\mathbb{P})=(\mpm{\mathbb{P}}{\twoT_\T},\leqslant)$ is a bounded lattice, which we considered to be the \emph{dual lattice} 
of the Plo\v{s}\v{c}ica space $\mathbb{P}=(X,E,\T)$.
Moreover, in \cite[Theorem 3.8]{P6} we used the map $\nu: \Lalg \to
\mathcal{L}(\mathcal{D}(\Lalg))$ given by $\nu (a)= e_a$ to present a short proof of the Plo\v s\v cica representation theorem for general lattices with bounds, which is given in~(iii) of Proposition~\ref{lem-eval}. 

To obtain our dual representation theorem for the Plo\v{s}\v{c}ica spaces, we said in~\cite{P6} that  two Plo\v{s}\v{c}ica spaces $\mathbb{P}_1$ and $\mathbb{P}_2$ are \emph{digraph-homeomorphic} and used the notation $\mathbb{P}_1 \cong \mathbb{P}_2$, if there exists a map 
$\vartheta: X_1 \to X_2$ such that $xE_1y$ iff $\vartheta(x)E_2 \vartheta(y)$ and $\vartheta$ is a (topological) homeomorphism. Such a digraph-homeomorphism $\vartheta$ from $\mathbb{P}=(X,E,\T)$ to
$\mathcal{D}(\mathcal{L}(\mathbb{P}))$ is assigning to $x \in X$
the partial map $\vartheta(x) = \varepsilon_x: \mathcal{L}(\mathbb{P}) \to \twoB$, which is given as follows:

\begin{lemma}[{\cite[Lemma 3.9]{P6}}]\label{lem:defn-v}
Let $\mathbb{P}=(X,E,\T)$ be a Plo\v{s}\v{c}ica space. For $x \in X$, define a partial map
$\varepsilon_x$ from $\mathcal{L}(\mathbb{P})$ to
$\twoB$
such that
for $\varphi \in \mpm{\mathbb{P}}{\twoT_\T}$
$$ \varex(\varphi) = \begin{cases}
\varphi(x) & \text{ if } x \in\dom\varphi,
\\
- & \text{otherwise.}
\end{cases}
$$
\noindent
Then $\varex \in \mph{\mathcal{L}(\mathbb{P})}{\twoB}$.
\end{lemma}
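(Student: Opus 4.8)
The plan is to check the two requirements hidden in the assertion $\varex\in\mph{\mathcal{L}(\mathbb{P})}{\twoB}$ separately: that $\varex$ is a partial homomorphism, and that it is maximal. Throughout I would use the description of $\mathcal{L}(\mathbb{P})$ from \cite[Proposition~3.7]{P6}: its bottom and top are the constant maps $\mathbf 0,\mathbf 1$ of full domain $X$, its order is $\varphi\le\psi\iff\varphi^{-1}(1)\subseteq\psi^{-1}(1)\iff\psi^{-1}(0)\subseteq\varphi^{-1}(0)$, and its operations satisfy $(\varphi\wedge\psi)^{-1}(1)=\varphi^{-1}(1)\cap\psi^{-1}(1)$ and $(\varphi\vee\psi)^{-1}(0)=\varphi^{-1}(0)\cap\psi^{-1}(0)$.

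First I would establish that $\varex$ is a partial homomorphism. Since $\mathbf 0,\mathbf 1$ have domain $X$ containing $x$, they lie in $\dom\varex$ with $\varex(\mathbf 0)=0$ and $\varex(\mathbf 1)=1$. For $\varphi,\psi\in\dom\varex$ I would argue by cases on $(\varphi(x),\psi(x))$. If $\varphi(x)=\psi(x)=1$ then $x\in\varphi^{-1}(1)\cap\psi^{-1}(1)=(\varphi\wedge\psi)^{-1}(1)$, so $\varphi\wedge\psi\in\dom\varex$ with value $1$; if at least one value is $0$ then $x\in\varphi^{-1}(0)\cup\psi^{-1}(0)\subseteq(\varphi\wedge\psi)^{-1}(0)$ by monotonicity of the order, so the value is $0$. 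In each case $\varex(\varphi\wedge\psi)=\varphi(x)\wedge\psi(x)$; the order-dual analysis, using the formula for $(\varphi\vee\psi)^{-1}(0)$, gives $\varex(\varphi\vee\psi)=\varphi(x)\vee\psi(x)$. Hence $\dom\varex$ is a $(0,1)$-sublattice and $\varex$ restricts to a $(0,1)$-homomorphism on it.

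For maximality I would suppose a partial homomorphism $f$ properly extends $\varex$, fix $\chi\in\dom f$ with $x\notin\dom\chi$, and set $v=f(\chi)$. Lemma~\ref{lem:Plos1.3} yields $y_0\in\chi^{-1}(1)$ with $(y_0,x)\in E$ and $z_0\in\chi^{-1}(0)$ with $(x,z_0)\in E$. Treating $v=1$ (the case $v=0$ is order-dual), the goal is to produce $\alpha\in\mpm{\mathbb{P}}{\twoT_\T}$ with $\alpha(x)=1$ and $(\alpha\wedge\chi)(x)=0$: then $\alpha$ and $\alpha\wedge\chi$ both lie in $\dom\varex\subseteq\dom f$, and $f(\alpha\wedge\chi)=f(\alpha)\wedge f(\chi)=1\wedge1=1$ would contradict $f(\alpha\wedge\chi)=\varex(\alpha\wedge\chi)=0$. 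The construction hinges on $B:=\{\,w\in\chi^{-1}(1)\mid(w,x)\in E\,\}$. First I would show $wE\not\subseteq xE$ for every $w\in B$: if $wE\subseteq xE$ then, since $(w,x)\in E$, condition (R) rules out $wE\subsetneq xE$, so $wE=xE$; but $w\in\chi^{-1}(1)$ gives $wE\cap\chi^{-1}(0)=\varnothing$ by Lemma~\ref{lem:Plos1.3}(ii), whence $xE\cap\chi^{-1}(0)=\varnothing$ and thus $x\in\chi^{-1}(1)$, contradicting $x\notin\dom\chi$. With $wE\not\subseteq xE$ secured, condition 2(a) of Definition~\ref{def:ploscicaspaces} provides, for each $w\in B$, an MPM $\varphi_w$ with $\varphi_w(x)=1$ and $w\notin\varphi_w^{-1}(1)$.

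The decisive and delicate step is then compactness. The open sets $X\setminus\varphi_w^{-1}(1)$ cover $B$; taking a finite subcover and letting $\alpha$ be the meet in $\mathcal{L}(\mathbb{P})$ of the corresponding $\varphi_w$ gives $x\in\alpha^{-1}(1)$ and $\alpha^{-1}(1)\cap B=\varnothing$, which by Lemma~\ref{lem:Plos1.3}(i) applied to $\alpha\wedge\chi$ is precisely $(\alpha\wedge\chi)(x)=0$. The obstacle I expect to be hardest is justifying the finite subcover: the cover is indexed by $B$, so one needs $B$ to be compact. As $\chi^{-1}(1)$ is $\T$-closed, this reduces to showing that $B=\chi^{-1}(1)\cap\{\,w\mid(w,x)\in E\,\}$ is $\T$-closed, i.e. to a closedness property of the section of $E$ at $x$; pinning this down from the Plo\v{s}\v{c}ica-space axioms is the crux. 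Granting it, the order-dual construction produces $\beta$ with $\beta(x)=0$ and $(\beta\vee\chi)(x)=1$ to dispatch the case $v=0$, and the two contradictions together give maximality, so $\varex\in\mph{\mathcal{L}(\mathbb{P})}{\twoB}$.
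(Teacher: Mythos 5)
Your overall architecture is sound and, as far as I can tell, is the natural route from the axioms: the homomorphism half is a routine computation with the formulas for $\wedge$ and $\vee$ in $\mathcal{L}(\mathbb{P})$; the reduction of maximality to producing $\alpha$ with $\alpha(x)=1$ and $(\alpha\wedge\chi)(x)=0$ (plus its order dual) is exactly the MDFIP criterion; and your observation that every $w\in B$ satisfies $wE\not\subseteq xE$ --- via (R) together with Lemma~\ref{lem:Plos1.3} --- is precisely where the TiRS axioms and condition 2(a) earn their keep. (This paper only cites \cite[Lemma~3.9]{P6} for the statement and reproduces no proof, so there is nothing in the text to compare against line by line.)

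The gap you flag is, however, genuine: $B=\chi^{-1}(1)\cap Ex$ need not be closed. Indeed $Ex=\bigcap\{\,X\setminus\psi^{-1}(1)\mid\psi\in\mpm{\mathbb{P}}{\twoT_\T},\ \psi(x)=0\,\}$ (one inclusion because MPMs preserve $E$, the other by (TED)), an intersection of \emph{open} sets, and no axiom of Definition~\ref{def:ploscicaspaces} forces this to be closed. The repair is to shrink $B$ to $B'=\{\,w\in\chi^{-1}(1)\mid Ew\subseteq Ex\,\}$. This set \emph{is} closed: by Lemma~\ref{lem:E}(ii) and condition 2(b), $\{\,w\mid Ew\subseteq Ex\,\}=\bigcap\{\,\psi^{-1}(0)\mid\psi(x)=0\,\}$, an intersection of closed sets. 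Moreover $B'$ suffices for your purposes: reflexivity of $E$ gives $B'\subseteq B$, and if $\alpha^{-1}(1)\cap B'=\varnothing$ while some $w\in\alpha^{-1}(1)\cap\chi^{-1}(1)$ had $(w,x)\in E$, then (Ti) would yield $t$ with $tE\subseteq wE$ and $Et\subseteq Ex$, whence $t\in\alpha^{-1}(1)\cap\chi^{-1}(1)$ by Lemma~\ref{lem:E}(i) and hence $t\in\alpha^{-1}(1)\cap B'$, a contradiction; so $\alpha^{-1}(1)\cap B'=\varnothing$ together with $x\in\alpha^{-1}(1)$ already forces $(\alpha\wedge\chi)(x)=0$ via Lemma~\ref{lem:Plos1.3}(i). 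Running your cover argument over the compact set $B'$ instead of $B$ (each $w\in B'$ lies in $B$, so $wE\not\subseteq xE$ and 2(a) still applies) then closes the case $v=1$, and the order dual, using 2(b) and the formula for $(\beta\vee\chi)^{-1}(0)$, handles $v=0$.
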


Finally, we used the above map $\vartheta$ to
show that every Plo\v{s}\v{c}ica space is digraph-homeomorphic to its second dual:

\begin{theorem}[{\cite[Theorem 3.10]{P6}}]\label{thm:graph-homeomorphic}
Let $\mathbb{P}=(X,E,\T)$ be a Plo\v{s}\v{c}ica space. Then
$\mathbb{P} \cong \mathcal{D}
(\mathcal{L}(\mathbb{P}))$.
\end{theorem}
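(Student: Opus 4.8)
The plan is to show that the assignment $\vartheta\colon x\mapsto\varex$ supplied by Lemma~\ref{lem:defn-v} is the required digraph-homeomorphism onto $\mathcal{D}(\mathcal{L}(\mathbb{P}))$, whose base set is $P_{\mathcal{L}(\mathbb{P})}=\mph{\mathcal{L}(\mathbb{P})}{\twoB}$ with relation given by (E1). By Lemma~\ref{lem:defn-v} each $\varex$ is an MPH of $\mathcal{L}(\mathbb{P})$ into $\twoB$, so $\vartheta$ maps $X$ into $P_{\mathcal{L}(\mathbb{P})}$, and it remains to check that $\vartheta$ (i) preserves and reflects the edge relation, (ii) is a bijection, and (iii) is a homeomorphism. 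For (i), by (E1) one has $(\varex,\varepsilon_y)\in E$ iff $\varphi(x)\le\varphi(y)$ for every $\varphi$ with $x,y\in\dom\varphi$. If $(x,y)\in E$ this holds since each MPM preserves $E$ on its domain; conversely, if $(x,y)\notin E$ then the condition (TED), available for $\mathbb{P}$ by Proposition~\ref{P6:3.11}, yields $\varphi$ with $\varphi(x)=1$ and $\varphi(y)=0$, so $(\varex,\varepsilon_y)\notin E$. Injectivity follows from the double disconnectedness (2) and the separation property (S) of the TiRS digraph $(X,E)$: if $x\ne y$ then $xE\ne yE$ or $Ex\ne Ey$, and applying the appropriate half of (2) produces an MPM on which $\varex$ and $\varepsilon_y$ disagree.

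The real work is surjectivity. Given $h\in P_{\mathcal{L}(\mathbb{P})}$ with associated MDFIP $\langle F,I\rangle=\langle h^{-1}(1),h^{-1}(0)\rangle$ of the bounded lattice $\mathcal{L}(\mathbb{P})$, I would seek a point in
$$
C=\bigcap_{\varphi\in F}\varphi^{-1}(1)\cap\bigcap_{\psi\in I}\psi^{-1}(0).
$$
Each $\varphi^{-1}(1)$ and $\psi^{-1}(0)$ is closed, being clopen in the closed set $\dom\varphi$, so by compactness of $(X,\T)$ in condition (1) it suffices to verify the finite intersection property. For $\varphi_1,\dots,\varphi_m\in F$ and $\psi_1,\dots,\psi_n\in I$ put $\varphi=\varphi_1\wedge\dots\wedge\varphi_m\in F$ and $\psi=\psi_1\vee\dots\vee\psi_n\in I$; since $\varphi\le\varphi_i$ and $\psi\ge\psi_j$ we get $\varphi^{-1}(1)\subseteq\varphi_i^{-1}(1)$ and $\psi^{-1}(0)\subseteq\psi_j^{-1}(0)$, so it is enough to find a point of $\varphi^{-1}(1)\cap\psi^{-1}(0)$. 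As $h$ is a homomorphism with $h(\varphi)=1\ne 0=h(\psi)$, we have $\varphi\nleqslant\psi$, i.e.\ $\varphi^{-1}(1)\nsubseteq\psi^{-1}(1)$. Once $C\ne\varnothing$ is secured, any $x\in C$ satisfies $\varex\supseteq h$, and maximality of $h$ forces $\varex=h$.

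Thus surjectivity reduces to the crux claim that $\varphi\nleqslant\psi$ implies $\varphi^{-1}(1)\cap\psi^{-1}(0)\ne\varnothing$, which I expect to be the main obstacle and where the (Ti) condition enters. Rewriting Lemma~\ref{lem:Plos1.3} in terms of the relation gives $x\in\varphi^{-1}(1)\iff xE\cap\varphi^{-1}(0)=\varnothing$ and $z\in\psi^{-1}(0)\iff Ez\cap\psi^{-1}(1)=\varnothing$. From $\varphi^{-1}(1)\nsubseteq\psi^{-1}(1)$ choose $x_0\in\varphi^{-1}(1)\setminus\psi^{-1}(1)$; since $x_0\notin\psi^{-1}(1)$, Lemma~\ref{lem:Plos1.3}(ii) yields $y_0\in\psi^{-1}(0)$ with $(x_0,y_0)\in E$. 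Applying (Ti) to $(x_0,y_0)$ produces $z$ with $zE\subseteq x_0E$ and $Ez\subseteq Ey_0$. Then $zE\cap\varphi^{-1}(0)\subseteq x_0E\cap\varphi^{-1}(0)=\varnothing$ gives $z\in\varphi^{-1}(1)$, while $Ez\cap\psi^{-1}(1)\subseteq Ey_0\cap\psi^{-1}(1)=\varnothing$ gives $z\in\psi^{-1}(0)$, establishing the claim.

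Finally, for the homeomorphism I would invoke condition (4): the family $\{X\setminus\varphi^{-1}(1)\}\cup\{X\setminus\varphi^{-1}(0)\}$ is a subbase for $\T$, so $\{\varphi^{-1}(1)\}\cup\{\varphi^{-1}(0)\}$ is a subbasis of closed sets of $X$. Since $\vartheta^{-1}(W_\varphi)=\varphi^{-1}(1)$ and $\vartheta^{-1}(V_\varphi)=\varphi^{-1}(0)$, while $\{V_\varphi,W_\varphi\}$ is the defining subbasis of closed sets of $\T_{\mathcal{L}(\mathbb{P})}$, the bijection $\vartheta$ carries one closed subbasis onto the other; as a bijection commutes with arbitrary unions and intersections, both $\vartheta$ and $\vartheta^{-1}$ are closed maps, so $\vartheta$ is a homeomorphism. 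Because $\T_{\mathcal{L}(\mathbb{P})}$ is only $T_1$, I would deliberately avoid the ``continuous bijection from a compact to a Hausdorff space'' shortcut and argue directly from this subbasis correspondence.
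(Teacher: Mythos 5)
Your proof is correct and follows exactly the route the paper indicates: the evaluation map $\vartheta\colon x\mapsto\varex$ of Lemma~\ref{lem:defn-v}, with (TED) (Proposition~\ref{P6:3.11}) for edge-reflection, the double disconnectedness for injectivity, compactness plus a finite-intersection-property argument (whose crux is settled by (Ti) via Lemma~\ref{lem:Plos1.3}, i.e.\ Lemma~\ref{lem:E}) for surjectivity, and condition (4) matched against the subbasis $\{V_\varphi, W_\varphi\}$ for the homeomorphism. The paper itself defers the proof to \cite[Theorem 3.10]{P6}, but your argument assembles precisely the ingredients the paper recalls for this purpose, so no substantive divergence or gap to report.
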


The following lemma concerning 
Plo\v{s}\v{c}ica spaces will be used in Section~\ref{sec:dual repr}
(see also \cite[Lemma~2.6]{P6}): 

\begin{lemma}\label{lem:E}
Let 
$\mathbb{P} 
=(X,E,\tau)$ be a Plo\v{s}\v{c}ica space and 
$\varphi \in \mpm{\mathbb{P}}{\twoT_\T}$. 
Let $x,y \in X$.

\begin{itemize}
    \item[(i)] If $xE \subseteq yE$ and $\varphi(y)=1$, then $\varphi(x)=1$.
    \item[(ii)] If $Ex \subseteq Ey$ and $\varphi(y)=0$, then $\varphi(x)=0$.
\end{itemize}
\end{lemma}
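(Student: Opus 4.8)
The plan is to reduce both parts to the characterisation of maximal partial $E$-preserving maps given in Lemma~\ref{lem:Plos1.3}, which supplies explicit descriptions of $\varphi^{-1}(0)$ and $\varphi^{-1}(1)$ in terms of the edge relation. For part (i), I would argue by contraposition against the description of $\varphi^{-1}(0)$. Suppose $xE \subseteq yE$ and $\varphi(y)=1$, but $\varphi(x) \neq 1$. Since $\varphi$ is an MPM and $y \in \dom\varphi$ with $\varphi(y)=1$, I would first observe that if $x \in \dom\varphi$ and $\varphi(x)=0$, this should contradict the hypothesis $xE \subseteq yE$: the aim is to find a point $z$ witnessing a failure of $E$-preservation. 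The cleaner route is to use Lemma~\ref{lem:Plos1.3}(ii), which says $\varphi^{-1}(1) = \{\, x \in X \mid \text{there is no } z \in \varphi^{-1}(0) \text{ with } (x,z)\in E \,\}$; so to conclude $x \in \varphi^{-1}(1)$ it suffices to show that no $z \in \varphi^{-1}(0)$ satisfies $(x,z)\in E$, i.e.\ that $xE \cap \varphi^{-1}(0) = \varnothing$. But $xE \subseteq yE$, so $xE \cap \varphi^{-1}(0) \subseteq yE \cap \varphi^{-1}(0)$, and since $\varphi(y)=1$ means $y \in \varphi^{-1}(1)$, the same Lemma~\ref{lem:Plos1.3}(ii) gives $yE \cap \varphi^{-1}(0) = \varnothing$. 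Hence $xE \cap \varphi^{-1}(0) = \varnothing$, so $x \in \varphi^{-1}(1)$, i.e.\ $\varphi(x)=1$.

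Part (ii) is entirely dual: I would use Lemma~\ref{lem:Plos1.3}(i), which characterises $\varphi^{-1}(0) = \{\, x \in X \mid \text{there is no } z \in \varphi^{-1}(1) \text{ with } (z,x)\in E\,\}$. Assuming $Ex \subseteq Ey$ and $\varphi(y)=0$, I want $Ex \cap \varphi^{-1}(1) = \varnothing$. Since $Ex \subseteq Ey$ and $y \in \varphi^{-1}(0)$, the characterisation of $\varphi^{-1}(0)$ gives $Ey \cap \varphi^{-1}(1) = \varnothing$, whence $Ex \cap \varphi^{-1}(1) = \varnothing$, so $x \in \varphi^{-1}(0)$ and $\varphi(x)=0$.

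The key point to get right is the direction of the set-inclusions and which polar ($xE$ versus $Ex$) pairs with which case, since these are easy to transpose. I expect the main (modest) obstacle to be purely bookkeeping: confirming that the two clauses of Lemma~\ref{lem:Plos1.3} are applied to the correct side and that the reduction to ``$xE \cap \varphi^{-1}(0) = \varnothing$'' (respectively ``$Ex \cap \varphi^{-1}(1) = \varnothing$'') is exactly the content of being in $\varphi^{-1}(1)$ (respectively $\varphi^{-1}(0)$). No appeal to the TiRS axioms or to the topology is needed; the statement is a formal consequence of maximality of $\varphi$ as encoded by Lemma~\ref{lem:Plos1.3}, together with the monotonicity of intersection under the inclusions $xE \subseteq yE$ and $Ex \subseteq Ey$.
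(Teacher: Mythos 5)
Your proposal is correct and follows essentially the same route as the paper: both reduce the claim to the characterisation of $\varphi^{-1}(0)$ and $\varphi^{-1}(1)$ in Lemma~\ref{lem:Plos1.3} together with the inclusion $xE \subseteq yE$ (resp.\ $Ex \subseteq Ey$). The only difference is cosmetic — the paper argues by contradiction, extracting a witness $z \in xE \cap \varphi^{-1}(0)$, while you phrase the same step directly as $xE \cap \varphi^{-1}(0) = \varnothing$.
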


\begin{proof}
We prove (i), while (ii) can be shown analogously. Let $xE \subseteq yE$ and $\varphi(y)=1$. Suppose that $\varphi(x)\ne1$. As $x\notin \varphi^{-1}(1)$, by Lemma~\ref{lem:Plos1.3} there is $z\in X$ such that $(x,z)\in E$ and $\varphi(z)=0$. Since $z\in xE$ and $xE \subseteq yE$, we have $(y,z)\in E$. As $\varphi(y)=1$, it follows $\varphi(z)\ne 0$, a contradiction.
\end{proof}

\section{Ortho-Plo\v{s}\v{c}ica spaces}\label{sec:orthospaces}

After simplifying the definition of Plo\v{s}\v{c}ica spaces in the previous section and 
recalling the dual 
representations 
between 
general lattices with bounds and 
Plo\v{s}\v{c}ica spaces, we are now ready to establish, in the same spirit, a dual space for a general ortholattice, which will be called an \emph{ortho-Plo\v{s}\v{c}ica space}.

So to switch again to ortholattices, we firstly present 
the following lemma that will be used in the proof of Theorem~\ref{prop:props-of-gmap}.

\begin{lemma}\label{lem:FIo}
Let 
$\Lalg'$ 
be an ortholattice and  
$f \in P_{\Lalg}$.
Let us denote by 
$\langle F,I \rangle$ 
the corresponding MDFIP 
$\langle f^{-1}(1),f^{-1}(0) \rangle$.
We define sets 
\begin{align}
F' &:= \{a'\in L \mid a\in F\}=\{a\in L \mid a'\in F\},\notag\\ 
I' &:= \{b'\in L \mid b\in I\}=\{b\in L \mid b'\in I\}.\notag
\end{align}
Then 
$\langle I',F' \rangle$ 
is an MDFIP.
\end{lemma}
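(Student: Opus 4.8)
The plan is to verify, for the pair $\langle I',F'\rangle$, the four requirements packaged in Definition~\ref{def:DFIP}: that $I'$ is a filter, that $F'$ is an ideal, that $I'\cap F'=\varnothing$, and finally that the pair is \emph{maximal} among DFIPs. The single structural fact driving the first three is that orthocomplementation $'$ is an order-reversing involution on $L$: from $(a')'=a$ one gets that $'$ is a bijection, and the De Morgan identities $(a\lor b)'=a'\land b'$ and $(a\land b)'=a'\lor b'$ together with $(a')'=a$ give $a\le b\iff b'\le a'$. First I would check that $I'$ is a filter. It is an up-set: if $b\in I$ and $b'\le c$, then $c'\le b$, and since $I$ is a down-set, $c'\in I$, whence $c=(c')'\in I'$. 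It is closed under meets: if $b_1,b_2\in I$, then $b_1'\land b_2'=(b_1\lor b_2)'\in I'$ because $I$ is closed under joins. The dual computation shows $F'$ is an ideal, using that $'$ reverses order and that $a_1'\lor a_2'=(a_1\land a_2)'$.

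Disjointness is then immediate: if $x\in I'\cap F'$, then by the defining descriptions of $I'$ and $F'$ we have $x'\in I$ and $x'\in F$, so $x'\in F\cap I=\varnothing$, a contradiction. At this point $\langle I',F'\rangle$ is established to be a DFIP.

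The main obstacle is maximality, and here the clean route is to exhibit the assignment
$$\Phi\colon \langle A,B\rangle\longmapsto \langle B',A'\rangle$$
as an involution on the collection of all DFIPs of $\Lalg'$. The structural computations above, carried out in full generality, show that $\Phi$ sends a DFIP to a DFIP ($B'$ is a filter, $A'$ an ideal, and $B'\cap A'=\varnothing$ since $B\cap A=\varnothing$), and $\Phi(\Phi(\langle A,B\rangle))=\langle (A')',(B')'\rangle=\langle A,B\rangle$ shows $\Phi$ is its own inverse. Because $'$ is a bijection of $L$, for any subsets one has $A_1\subseteq A_2\iff A_1'\subseteq A_2'$, so $\Phi$ preserves the componentwise-inclusion order on DFIPs in each coordinate; being an order-preserving involution, it is an order-isomorphism of the poset of DFIPs. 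Maximality is invariant under an order-isomorphism, so from the hypothesis that $\langle F,I\rangle$ is an MDFIP I would conclude that $\Phi(\langle F,I\rangle)=\langle I',F'\rangle$ is maximal, i.e.\ an MDFIP, completing the proof.

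As an alternative to the involution argument one could prove maximality directly: assuming a strictly larger DFIP $\langle G,J\rangle$ with $I'\subseteq G$, $F'\subseteq J$ and picking a witness $x$ in, say, $G\comp I'$, apply Lemma~\ref{lem:S} to the maximal pair $\langle F,I\rangle$ to produce an element contradicting disjointness of $\langle G,J\rangle$. I expect this to be more delicate to bookkeep, so I would favour the involution $\Phi$, which reduces maximality of $\langle I',F'\rangle$ to the already-assumed maximality of $\langle F,I\rangle$ in one line.
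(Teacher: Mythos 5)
Your proof is correct and follows essentially the same route as the paper: verify that $I'$ is a filter and $F'$ an ideal via the order-reversing involution and De Morgan laws, note disjointness, and obtain maximality by observing that $'$ carries any proper extension of $\langle I',F'\rangle$ back to a proper extension of $\langle F,I\rangle$. Your packaging of the last step as an order-isomorphism $\Phi$ of the poset of DFIPs is just a tidier statement of the paper's direct contradiction argument.
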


\begin{proof}
To show that $I'$ is a filter 
of $\Lalg$, let us assume $a\in I'$, $b\in L$ and $a
\leqslant b$. Then $a'\in I$ and $b'
\leqslant 
a'$, whence $b'\in I$. Now let $c,d\in I'$, i.e. $c',d'\in I$.  
Since $I$ is an ideal, $c'\vee d' \in I$, which gives $(c'\vee d')'=c \wedge d \in I'$, hence $I'$ is a filter. 
Dually, $F'$ is an ideal 
of $\Lalg$.

As $\langle F,I \rangle$ 
is a DFIP, 
$\langle I',F' \rangle$ is also a DFIP,
since $a\in F\cap I$ iff $a'\in F'\cap I'$. 

Without loss of generality, 
suppose that the filter $I'$ can be properly extended to a filter $\nabla$ disjoint from the ideal $F'$, i.e. there exists $e\in \nabla$ such that $e\notin 
I' \cup F'$. 
Then it is easy to show that $\nabla':= \{a'\in L \mid a\in \nabla\}=\{a\in L \mid a'\in \nabla\}$ is an ideal properly extending the ideal $I$ and disjoint from the filter $F$, a contradiction. (We observe 
that $e'\in \nabla'$ and $e'\notin 
I\cup F$.) 
\end{proof}

To define an ortho-Plo\v{s}\v{c}ica space as a dual space for a general ortholattice 
$\Lalg'$
we will need to find a map on the dual Plo\v{s}\v{c}ica space of the lattice-reduct of the ortholattice 
$\Lalg'$,
which would on the dual side represent the orthocomplement operation. 
We remark that the use of our concept of an MPH below always refers to the lattice-reduct of an ortholattice 
$\Lalg'$. 

\begin{lemma}\label{lem:gmap} 
Let 
$\Lalg'=
(L;\vee, \wedge, ',0,1)$ be an ortholattice.
For an MPH $f:\Lalg \to \mathbf{2}$ and $a \in L$, define $(g(f))(a)=0$ if $f(a')=1$ and $(g(f))(a)=1$ if $f(a')=0$.
Then $g(f) \in \mph{\Lalg}{\twoB}$,  
i.e. is an MPH. 
\end{lemma}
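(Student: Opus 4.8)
The plan is to reduce the statement to Lemma~\ref{lem:FIo} via the standard correspondence between MPHs from $\Lalg$ to $\twoB$ and MDFIPs of $\Lalg$ recalled in Section~\ref{sec:prelim}. First I would record the domain of $g(f)$: since $g(f)$ is defined at $a$ exactly when $f$ is defined at $a'$, and since $'$ is an involution on $L$, we have $\dom(g(f)) = \{\, a \in L \mid a' \in \dom f \,\}$. Writing $\langle F, I\rangle = \langle f^{-1}(1), f^{-1}(0)\rangle$ for the MDFIP corresponding to $f$, so that $\dom f = F \cup I$, I would then unwind the two defining clauses of $g$: the equality $(g(f))(a) = 1$ holds iff $f(a') = 0$ iff $a' \in I$, while $(g(f))(a) = 0$ holds iff $f(a') = 1$ iff $a' \in F$. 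In the notation of Lemma~\ref{lem:FIo} this says precisely that $(g(f))^{-1}(1) = I'$ and $(g(f))^{-1}(0) = F'$, and hence $\dom(g(f)) = I' \cup F'$.

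The key step is then to invoke Lemma~\ref{lem:FIo}, which guarantees that $\langle I', F'\rangle$ is an MDFIP of $\Lalg$ (note the swap of roles: the filter is now $I'$ and the ideal is $F'$, because $'$ is order-reversing). Since the partial map $g(f)$ has domain $I' \cup F'$, takes the value $1$ exactly on the filter $I'$ and the value $0$ exactly on the ideal $F'$, it is exactly the MPH associated with the MDFIP $\langle I', F'\rangle$ under the MPH--MDFIP correspondence. Therefore $g(f) \in \mph{\Lalg}{\twoB}$, as required.

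I do not expect a genuine obstacle here, since all the work of showing that $I'$ is a filter, $F'$ is an ideal, that they are disjoint, and that the pair is maximal has already been carried out in Lemma~\ref{lem:FIo}. The only point requiring care is the bookkeeping noted above: one must track that orthocomplementation reverses filter and ideal, so that the $1$-set of $g(f)$ is the filter $I'$ rather than $F'$, and dually for the $0$-set. An alternative, more self-contained route would avoid MDFIP language and verify directly that $\dom(g(f))$ is a $(0,1)$-sublattice and that $g(f)$ preserves $\vee$, $\wedge$, $0$ and $1$, using the De Morgan laws $(a \wedge b)' = a' \vee b'$ and $(a \vee b)' = a' \wedge b'$ together with $0' = 1$ and $1' = 0$; maximality would then follow from the maximality of $f$ by applying $'$ to any hypothetical proper extension (the construction $h \mapsto (a \mapsto 1 - h(a'))$ sends a partial homomorphism extending $g(f)$ to one extending $f$). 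Routing through Lemma~\ref{lem:FIo} is, however, cleaner and reuses existing work.
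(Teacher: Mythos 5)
Your proof is correct, but it takes a different route from the paper's. The paper proves Lemma~\ref{lem:gmap} directly: it verifies from scratch that $g(f)^{-1}(1)$ is a filter and $g(f)^{-1}(0)$ is an ideal (using the De Morgan identities), that they are disjoint, and that the pair is maximal, with the maximality step going through Lemma~\ref{lem:S} applied to $S=f^{-1}(1)$. You instead observe that, under the MPH--MDFIP correspondence, the pair associated with $g(f)$ is literally the pair $\langle I',F'\rangle$ of Lemma~\ref{lem:FIo} (with $\langle F,I\rangle=\langle f^{-1}(1),f^{-1}(0)\rangle$), so the whole statement follows at once from that lemma; your bookkeeping $(g(f))^{-1}(1)=I'$ and $(g(f))^{-1}(0)=F'$ is accurate, and there is no circularity since Lemma~\ref{lem:FIo} is proved independently and is used in the paper only later, for condition (O) in Theorem~\ref{prop:props-of-gmap}. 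Your reduction buys economy: it exposes that the paper's direct verification in Lemma~\ref{lem:gmap} essentially repeats the filter/ideal/disjointness/maximality computations already carried out in Lemma~\ref{lem:FIo}, just phrased in terms of partial maps rather than pairs. What the paper's direct argument buys in exchange is independence from the MDFIP translation at this point (it works with $g(f)$ as a partial homomorphism throughout, which matches how $g$ is then used in Proposition~\ref{prop:gmap-cont}) and a maximality proof via Lemma~\ref{lem:S} rather than via the transport-of-extensions argument inside Lemma~\ref{lem:FIo}. Both are sound; your second, self-contained sketch is essentially the paper's actual proof.
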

\begin{proof}
We prove that 
$\langle g(f)^{-1}(1),g(f)^{-1}(0) \rangle$
is an MDFIP. 

To show that $g(f)^{-1}(1)$ is a filter in $\Lalg$, let us firstly assume $a\in g(f)^{-1}(1)$, $b\in L$ and $a\leq b$. Then $g(f)(a)=1$, i.e. $f(a')=0$ and $b'\leq a'$. Since $f$ as a (partial) lattice homomorphism is order-preserving, we obtain $f(b')=0$, thus $b\in g(f)^{-1}(1)$. Secondly, let $c,d\in g(f)^{-1}(1)$, i.e. $f(c')=0$ and $f(d')=0$. 
Then $f((c\wedge d)')=f(c'\vee d')=f(c')\vee f(d')=0$, whence $c \wedge d \in g(f)^{-1}(1)$  
showing that $g(f)^{-1}(1)$ is a filter in $\Lalg$.

Dually, one can prove that  $g(f)^{-1}(0)$ is an ideal in $\Lalg$.

To show that 
$\langle g(f)^{-1}(1),g(f)^{-1}(0) \rangle$ 
is a DFIP, suppose that $a\in g(f)^{-1}(1) \cap g(f)^{-1}(0)$ for some $a\in L$. Then $a'\in f^{-1}(0) \cap f^{-1}(1)$, which contradicts that $f$ is a (partial) lattice homomorphism.

Finally, to show the maximality of the DFIP 
$\langle g(f)^{-1}(1),g(f)^{-1}(0) \rangle$
suppose that $g(f)^{-1}(1)$ can be extended to some filter $F$ disjoint from $g(f)^{-1}(0)$ and that $F$ includes 
an element $b\notin g(f)^{-1}(1)$. Then we have $b'\notin f^{-1}(0)$. By Lemma~\ref{lem:S} with $S=f^{-1}(1)$ we have that there is $c\in f^{-1}(0)$ such that $b'\vee c \in f^{-1}(1)$, whence $b\wedge c' \in g(f)^{-1}(0)$. Since $c'\in g(f)^{-1}(1)$, we get $c'\in F$, thus $b\wedge c' \in F$, which contradicts that $F$ is disjoint from $g(f)^{-1}(0)$.  
\end{proof}

It is important to show that the map $g$ is compatible with the structure of the dual Plo\v{s}\v{c}ica space of the lattice-reduct of an ortholattice.

\begin{prop}\label{prop:gmap-cont}
Let $\Lalg'$ be an ortholattice and $\mathcal{D}({\Lalg})= (P_{\Lalg}, E, \T_{\Lalg})$ be the dual Plo\v{s}\v{c}ica space of the lattice-reduct $\Lalg$ of $\Lalg'$. Then the map $g: P_{\Lalg} \to P_{\Lalg}$
is continuous.     
\end{prop}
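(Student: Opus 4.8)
The plan is to exploit the standard criterion that a map between topological spaces is continuous precisely when the preimage of every member of a subbasis of closed sets is closed. The topology $\T_{\Lalg}$ on $P_{\Lalg}$ is generated (as recalled before Proposition~\ref{lem-eval}) by the subbasis of closed sets
$$
\{\, V_a \mid a\in L \,\} \cup \{\, W_a \mid a\in L \,\},
\qquad
V_a=\{\, f\mid f(a)=0 \,\},\quad W_a=\{\, f\mid f(a)=1 \,\}.
$$
Since taking preimages commutes with arbitrary unions and intersections, it will be enough to verify that $g^{-1}(V_a)$ and $g^{-1}(W_a)$ are $\T_{\Lalg}$-closed for each $a\in L$; closedness of preimages of finite unions and of arbitrary intersections of subbasic closed sets then follows automatically.

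The key step is a direct computation of these two preimages using the definition of $g$ from Lemma~\ref{lem:gmap}. By definition $(g(f))(a)=0$ holds exactly when $f(a')=1$, and $(g(f))(a)=1$ holds exactly when $f(a')=0$; in particular $a\in\dom(g(f))$ iff $a'\in\dom f$. Reading off the two cases, I would obtain
$$
g^{-1}(V_a)=\{\, f\in P_{\Lalg}\mid (g(f))(a)=0 \,\}=\{\, f\mid f(a')=1 \,\}=W_{a'},
$$
and symmetrically $g^{-1}(W_a)=V_{a'}$. Because $a'\in L$ whenever $a\in L$, the sets $W_{a'}$ and $V_{a'}$ are themselves members of the subbasis of closed sets, hence closed. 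This yields continuity of $g$ immediately.

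There is no serious obstacle here; the map $g$ is essentially the orthocomplement acting on the index, and it sends the subbasis of closed sets to itself by swapping the roles of $V$ and $W$ and replacing $a$ by $a'$. The only point demanding a little care is the partiality of the MPHs: one must check that the equivalence $(g(f))(a)=0\iff f(a')=1$ respects the implicit domain conditions built into the sets $V_a$ and $W_a$, i.e. that membership of $f$ in $W_{a'}$ already encodes both $a'\in\dom f$ and $f(a')=1$, which in turn is exactly the condition for $g(f)\in V_a$. Once this bookkeeping is noted, the identities $g^{-1}(V_a)=W_{a'}$ and $g^{-1}(W_a)=V_{a'}$ hold on the nose and the proof is complete.
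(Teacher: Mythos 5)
Your proposal is correct and follows essentially the same route as the paper: both verify continuity on the subbasis generated by the sets $V_a$ and $W_a$, computing that $g$ pulls back $V_a$ to $W_{a'}$ and $W_a$ to $V_{a'}$ (the paper phrases this with the complementary open sets, which is equivalent). The only cosmetic difference is that the paper first invokes the representation theorem to identify the MPM-indexed subbase from Definition~\ref{def:ploscicaspaces}(4) with the family $\{V_a, W_a \mid a\in L\}$, whereas you start directly from that family as defined before Proposition~\ref{lem-eval}.
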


\begin{proof}
We recall that our topology on $P_{\Lalg}$ has as 
its
subbase of open sets the sets $P_{\Lalg}\setminus \varphi^{-1}(0)$ and $P_{\Lalg}\setminus \varphi^{-1}(1)$ for all $\varphi\in \mpm{\mathcal{D}({\Lalg})}{\twoB}$. Because $\mathcal{D}({\Lalg})$ is a dual space of a lattice $\Lalg$, by Plo\v{s}\v{c}ica's representation theorem for $\Lalg$ we can assume that every $\varphi\in \mpm{\mathcal{D}({\Lalg})}{\twoB}$ is an evaluation map $e_a$ for some $a\in L$. Hence we may assume that our topology on $\mathcal{D}({\Lalg})$ has as  its 
subbase 
of open sets the sets  $P_{\Lalg}\setminus 
V_a$ 
and $P_{\Lalg}\setminus 
W_a$  
for all $a\in L$, where 
$V_a 
=\{x\in P_{\Lalg}\mid e_a(x)= x(a)=0\}$ and $W_a 
=\{x\in P_{\Lalg}\mid e_a(x) = x(a)=1\}$. (See also Plo\v{s}\v{c}ica's paper~\cite{Plos95}.) 

Now we are going to show that the pre-images of subbasic open sets in $g$ are again open. We have that for any $a\in L$,
$$
g^{-1}(P_{\Lalg}\setminus 
W_a) 
= \{x\in P_{\Lalg}\mid g(x)(a)\ne 1\}=\{x\in P_{\Lalg}\mid x(a')\ne 0\}= P_{\Lalg}\setminus 
V_{a'},  
$$
which is an open set. Analogously, 
$$
g^{-1}(P_{\Lalg}\setminus 
V_a)  
= \{x\in P_{\Lalg}\mid g(x)(a)\ne 0\}=\{x\in P_{\Lalg}\mid x(a')\ne 1\}= P_{\Lalg}\setminus 
W_{a'} 
$$
is an open set. Hence $g$ is continuous.
\end{proof}

We will now define the dual space $\mathcal{D}(\Lalg')$ of a general ortholattice $\Lalg'$. It will be the dual Plo\v{s}\v{c}ica space of the lattice-reduct of the ortholattice $\Lalg'$ equipped with the map $g$ as defined above.

\begin{definition}\label{def:dual-orthospace}
The dual of an ortholattice 
$\Lalg'$ is the structure 
$\mathcal{D}(\Lalg')=(P_{\Lalg},E,\T,g)$,
where $g$ is defined as in Lemma~\ref{lem:gmap}
and 
$(P_{\Lalg},E,\T)$
is the dual Plo\v{s}\v{c}ica space of the lattice-reduct $\Lalg$ of $\Lalg'$. 
\end{definition}

The conditions in the proposition below were originally formulated by Marais~\cite{KM20} by translating from Dzik et al.~\cite{DOvA}. They were stated in~\cite{KM20} without proof. 
\begin{theorem}\label{prop:props-of-gmap}
Let $\Lalg$ be an ortholattice. The map $g$ of $\mathcal{D}(\Lalg)$ satisfies the following properties: 
\begin{itemize}
\item[(M1)] $g(g(x))=x$,
\item[(M2)] $xE \subseteq yE \:\Longrightarrow\: Eg(x) \subseteq Eg(y)$,
\item[(M3)] $Ex \subseteq Ey \:\Longrightarrow\: g(x)E \subseteq g(y)E $,
\item[(O)] $\forall x\,\exists y (yE \subseteq xE
\,\&\,
Ey \subseteq Eg(x))$. 
\end{itemize}
\end{theorem}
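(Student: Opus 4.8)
The plan is to reduce every clause to the underlying MDFIPs and then feed the result into Lemma~\ref{lem:dual}. Write $F_x=x^{-1}(1)$ and $I_x=x^{-1}(0)$ for the MDFIP associated with a point $x\in P_{\Lalg}$, and for $S\subseteq L$ write $S'=\{\,a\in L\mid a'\in S\,\}$. Unwinding the definition of $g$ from Lemma~\ref{lem:gmap} gives $g(x)^{-1}(1)=I_x'$ and $g(x)^{-1}(0)=F_x'$; that is, $g(x)$ is exactly the MDFIP $\langle I_x',F_x'\rangle$ furnished by Lemma~\ref{lem:FIo}, so in particular $F_{g(x)}=I_x'$ and $I_{g(x)}=F_x'$. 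I would record at the start the two elementary facts used throughout: the map $S\mapsto S'$ is an involution (because $(a')'=a$) and it is inclusion-preserving.

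For (M1), applying the description of $g$ twice yields $g(g(x))^{-1}(1)=(I_{g(x)})'=(F_x')'=F_x$ and likewise $g(g(x))^{-1}(0)=I_x$, using involutivity; hence $g(g(x))=x$. For (M2) and (M3) I would translate both sides through Lemma~\ref{lem:dual}. By Lemma~\ref{lem:dual}(i), $xE\subseteq yE$ is the same as $F_y\subseteq F_x$, while by Lemma~\ref{lem:dual}(ii), $Eg(x)\subseteq Eg(y)$ is the same as $I_{g(y)}\subseteq I_{g(x)}$, i.e.\ $F_y'\subseteq F_x'$; since $F_y\subseteq F_x$ forces $F_y'\subseteq F_x'$ by monotonicity, (M2) follows. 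Condition (M3) is symmetric: $Ex\subseteq Ey$ means $I_y\subseteq I_x$, and $g(x)E\subseteq g(y)E$ means $F_{g(y)}\subseteq F_{g(x)}$, i.e.\ $I_y'\subseteq I_x'$, which again follows by monotonicity. These three parts are purely formal.

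The substance of the theorem is (O), where the witness $y$ must be produced by hand. Via Lemma~\ref{lem:dual} the two targets unwind to $F_x\subseteq F_y$ (from $yE\subseteq xE$) and, using $I_{g(x)}=F_x'$, to $F_x'\subseteq I_y$ (from $Ey\subseteq Eg(x)$). So it suffices to exhibit an MDFIP $\langle F_y,I_y\rangle$ whose filter contains $F_x$ and whose ideal contains $F_x'$. The candidate to start from is $\langle F_x,F_x'\rangle$: its first component is already a filter, its second is an ideal by Lemma~\ref{lem:FIo}, and the two are disjoint. Indeed, if $a\in F_x\cap F_x'$ then both $a$ and $a'$ lie in $F_x$, whence $a\wedge a'=0\in F_x$; but $0\in I_x$ and $F_x\cap I_x=\varnothing$, a contradiction. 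Thus $\langle F_x,F_x'\rangle$ is a DFIP, and a routine Zorn's lemma argument (the union of a chain of DFIPs extending it is again such a DFIP, and a maximal such extension is an MDFIP) extends it to a maximal pair $\langle F_y,I_y\rangle$, which is the required point $y$.

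The only genuine obstacle is the disjointness step $F_x\cap F_x'=\varnothing$ in (O), which is precisely where the orthocomplement identity $a\wedge a'=0$ (together with $0\in I_x$) enters; all the remaining content is bookkeeping through Lemma~\ref{lem:dual} and the involutivity and monotonicity of $S\mapsto S'$.
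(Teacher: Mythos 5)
Your proof is correct and follows essentially the same route as the paper's: both reduce everything to the MDFIP description of $g$ (via Lemma~\ref{lem:FIo} and Lemma~\ref{lem:dual}) and, for (O), start from the disjoint pair $\langle F_x,F_x'\rangle$ and extend it to an MDFIP containing $F_x$ in its filter and $F_x'$ in its ideal. The remaining differences are cosmetic: you dispatch (M2)/(M3) by monotonicity of $S\mapsto S'$ where the paper chases elements, you use a single Zorn extension where the paper maximises the ideal and then the filter in two steps, and you derive disjointness from $a\wedge a'=0$ where the paper uses $a\vee a'=1$.
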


\begin{proof}
To show (M1), notice that for any $a\in L$ and any MPH $x\in \mathcal{D}(\Lalg)$, $g(g(x))(a)=1$ iff $g(x)(a')=0$ iff $x(a)=1$. Analogously, $g(g(x))(a)=0$ iff $x(a)=0$. Hence $g(g(x))=x$ as required.

To prove (M2), let us assume that $xE \subseteq yE$ and $z\in Eg(x)$ for some $z\in L$, i.e. $y^{-1}(1) \subseteq x^{-1}(1)$ 
by Lemma~\ref{lem:dual} 
and $(z,g(x))\in E$. From the latter it follows that $z^{-1}(1)\cap g(x)^{-1}(0)= \emptyset$. We want to show that   $z\in Eg(y)$, thus $z^{-1}(1)\cap g(y)^{-1}(0)= \emptyset$. Suppose that $a\in z^{-1}(1)\cap g(y)^{-1}(0)$ for some $a\in L$. Then $z(a)=1$ and $g(y)(a)=0$. The latter yields $y(a')=1$, whence $x(a')=1$, thus $g(x)(a)=0$. It follows $a\in z^{-1}(1)\cap g(x)^{-1}(0)$, a contradiction.

One can analogously prove that (M3) holds.

We finally show (O). Let $x\in \mathcal{D}(\Lalg)$ and let  
$\langle F,I \rangle$
be the corresponding MDFIP 
$\langle x^{-1}(1),x^{-1}(0) \rangle$.  
By applying Lemma~\ref{lem:FIo}, we have that $y:= 
\langle I',F' \rangle$ 
is an MDFIP. Let us take firstly the pair $y_0:= 
\langle F,F' \rangle$.  
This is a DFIP since $a\in F\cap F'$ for some $a\in L$ gives $a'\in F'$, whence for the ideal $F'$ we have 
$a\vee a'=1 \in F'$, 
a contradiction. Now let $y_1:=  
\langle F,\overline{F'} \rangle$  
be a DFIP with $\overline{F'}$ 
maximal with respect to being disjoint 
from $F$. Finally, let $y:= 
\langle \overline{F},\overline{F'} \rangle$ 
be a DFIP with $\overline{F}$ 
maximal with respect to being disjoint
from $\overline{F'}$, i.e. $y$ is 
an MDFIP. Now clearly 
$x^{-1}(1)\subseteq y^{-1}(1)$, whence $yE\subseteq xE$.

To prove $Ey\subseteq Eg(x)$, we need to show that $g(x)^{-1}(0)\subseteq y^{-1}(0)$, i.e. that for all $a\in L$, $x(a')=1$ implies $y(a)=0$. Hence we need to show that $a'\in x^{-1}(1)=F$ (which is equivalent to $a=a''\in F'$) implies $a\in y^{-1}(0)=\overline{F'}$. This is evident as $F'\subseteq \overline{F'}$.
\end{proof}

Now we are ready to define an abstract ortho-Plo\v{s}\v{c}ica space.

\begin{definition}
An \emph{ortho-Plo\v{s}\v{c}ica} space is a structure 
$ 
\mathbf{X} = 
(X,E,\tau, g)$ such that $(X,E,\tau)$ is a Plo\v{s}\v{c}ica space and $g : X\to X$ is a continuous map satisfying 
(M1)--(M3) and (O). 
\end{definition}

\section{Dual representation theorems}\label{sec:dual repr}

In Section~\ref{sec:Plospaces} we presented a simplified version of 
the Plo\v{s}\v{c}ica spaces from~\cite{P6} and we then recalled the dual 
representations  
between general lattices with bounds and  
Plo\v{s}\v{c}ica spaces. 
We are now ready to present the dual representation theorems between general ortholattices and the ortho-Plo\v{s}\v{c}ica spaces introduced in the previous section. 

We firstly need to present the dual ortholattice of a general ortho-Plo\v{s}\v{c}ica space.

\begin{definition}
For an ortho-Plo\v{s}\v{c}ica 
space  
$\mathbf{X}=(X,E,\tau,g)$, we define its dual as a lattice with bounds  $\mathcal{E}
(\mathbf{X})=(\mpm{\X}{\twoT_\T}, 
\wedge,\vee,\varphi_0,\varphi_1,\neg)$, 
where $\neg$ assigns to every 
$\varphi\in \mpm{\X}{\twoT_\T}$
a partial map $\neg \varphi: X \to \{0,1\}$ given 
as follows:
$$ (\neg \varphi)(x) = \begin{cases} 1 &\text{ if } \varphi(g(x))=0,\\
0 &\text{ if } \varphi(g(x))=1,\\
- &\text{ otherwise}.\end{cases}$$
\end{definition}

\begin{lemma}\label{lem:neg-is-ortho}
Let $\mathbf{X}=(X,E,\tau,g)$ be an ortho-Plo\v{s}\v{c}ica space. Then $\neg$ as defined above is an orthocomplement on the lattice reduct of $\mathcal{E}(\mathbf{X})$. Hence the dual $\mathcal{E}(\mathbf{X})=(
\mpm{\X}{\twoT_\T},
\wedge,\vee,\varphi_0,\varphi_1,\neg)$ of an ortho-Plo\v{s}\v{c}ica space $\mathbf{X}$  is an ortholattice. 
\end{lemma}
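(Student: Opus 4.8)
The plan is to verify that $\neg$ defines an orthocomplement on $\CL(\X) = \mpm{\X}{\twoT_\T}$, i.e. that it satisfies $\neg\neg\varphi = \varphi$, the De Morgan laws $\neg(\varphi\wedge\psi)=\neg\varphi\vee\neg\psi$ and $\neg(\varphi\vee\psi)=\neg\varphi\wedge\neg\psi$, and the complementation laws $\varphi\wedge\neg\varphi=\varphi_0$ and $\varphi\vee\neg\varphi=\varphi_1$. The natural strategy, given the dual-representation machinery already in place, is to first observe that $\neg\varphi$ is genuinely a maximal partial morphism (an element of $\mpm{\X}{\twoT_\T}$), so that $\neg$ is a well-defined unary operation on $\CL(\X)$, and then to translate each ortholattice identity into a statement about the sets $\varphi^{-1}(0)$ and $\varphi^{-1}(1)$, using the characterisation of the lattice order by $\varphi\le\psi \iff \varphi^{-1}(1)\subseteq\psi^{-1}(1) \iff \psi^{-1}(0)\subseteq\varphi^{-1}(0)$.

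First I would pin down the preimages of $\neg\varphi$ directly from its definition: $(\neg\varphi)^{-1}(1) = \{\,x \mid \varphi(g(x))=0\,\} = g^{-1}(\varphi^{-1}(0))$ and dually $(\neg\varphi)^{-1}(0) = g^{-1}(\varphi^{-1}(1))$, where I use that $g$ is an involution by (M1) so that $g^{-1}=g$. With these formulas, the involution law is immediate: applying the description twice gives $(\neg\neg\varphi)^{-1}(1) = g^{-1}(g^{-1}(\varphi^{-1}(1))) = \varphi^{-1}(1)$, and since an MPM is determined by its $1$-set (by Lemma~\ref{lem:Plos1.3}) this yields $\neg\neg\varphi = \varphi$. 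The De Morgan laws should reduce to the fact that the lattice operations on $\CL(\X)$ are realised by unions and intersections of the relevant polars $\ECL$, $\ECR$ applied to the $1$- and $0$-sets (this is exactly what condition~(3) of Definition~\ref{def:ploscicaspaces} guarantees to be closed), combined with $g$ being a bijection that swaps the roles of $1$-sets and $0$-sets; order-reversal of $\neg$ follows from the preimage formulas together with the equivalence $\varphi\le\psi \iff \psi^{-1}(0)\subseteq\varphi^{-1}(0)$.

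Before any of this, the essential point that must be checked is well-definedness and maximality of $\neg\varphi$, and this is where the axioms (M2), (M3) and (O) do the real work. The plan is to show $\neg\varphi$ preserves $E$: if $(x,y)\in E$, then using the fact (Lemma~\ref{lem:E}) that $E$-relationships propagate through $\varphi$ together with (M2)/(M3) controlling how $g$ interacts with $xE$ and $Ex$, one deduces $(\neg\varphi)(x)\le(\neg\varphi)(y)$. Maximality (that $\neg\varphi$ admits no proper extension as a partial morphism) is then where axiom (O) is crucial: (O) supplies, for each $x$, a witness $y$ with $yE\subseteq xE$ and $Ey\subseteq Eg(x)$, which is precisely what is needed to rule out extending the domain of $\neg\varphi$ — any point outside $\dom(\neg\varphi)$ would, via this witness, force a contradiction with the maximality of $\varphi$ itself. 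Finally, the complementation laws $\varphi\wedge\neg\varphi=\varphi_0$ and $\varphi\vee\neg\varphi=\varphi_1$ should follow by computing $\varphi^{-1}(1)\cap(\neg\varphi)^{-1}(1) = \varphi^{-1}(1)\cap g^{-1}(\varphi^{-1}(0))$ and showing this is empty (so the meet is the bottom $\varphi_0$), again invoking (O) to produce the disjointness.

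The main obstacle I expect is the maximality argument for $\neg\varphi$: verifying that $\neg\varphi$ is a \emph{maximal} partial morphism, not merely a partial one, is the delicate step, since it requires combining the maximality of $\varphi$, the involutivity and continuity of $g$ (Proposition~\ref{prop:gmap-cont}), and the existential axiom (O) in a single argument, and it is here that one genuinely uses that $\X$ is an \emph{ortho}-Plo\v{s}\v{c}ica space rather than just a Plo\v{s}\v{c}ica space with an arbitrary involution. The algebraic identities, by contrast, should be routine once the preimage formulas $(\neg\varphi)^{-1}(1)=g(\varphi^{-1}(0))$ and $(\neg\varphi)^{-1}(0)=g(\varphi^{-1}(1))$ are established.
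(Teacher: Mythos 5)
Your overall architecture matches the paper's proof: first establish that $\neg\varphi$ is a genuine element of $\mpm{\X}{\twoT_\T}$ ($E$-preserving, continuous via $(\neg\varphi)^{-1}(1)=g^{-1}(\varphi^{-1}(0))$, maximal), then get involutivity from (M1), the de~Morgan laws from order-reversal of $\neg$, and the complementation laws $\varphi\wedge\neg\varphi=\varphi_0$, $\varphi\vee\neg\varphi=\varphi_1$ from emptiness of the relevant intersections of $1$-sets (resp.\ $0$-sets), with (O) supplying that emptiness. Those parts are sound and are exactly what the paper does (your polar-based detour for de~Morgan is unnecessary once order-reversal plus involutivity is in hand, but harmless).

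The genuine gap is in the step you yourself flag as the delicate one: maximality of $\neg\varphi$. You attribute it to axiom (O), but (O) is not the working axiom there, and the sketch as written does not close. If $x\notin\dom(\neg\varphi)$, i.e.\ $g(x)\notin\dom\varphi$, the witness $y$ from (O) satisfies $yE\subseteq xE$ and $Ey\subseteq Eg(x)$; this relates $x$ to $g(x)$ but carries no information about the values of $\varphi$ near $g(x)$, so there is no contradiction to be extracted from a hypothetical extension $\psi$ with $\psi(x)=1$. The argument that actually works (and is the paper's) is: from $\varphi(g(x))\neq 0$ and Lemma~\ref{lem:Plos1.3} obtain $y\in\varphi^{-1}(1)$ with $(y,g(x))\in E$; apply the interpolation axiom \emph{(Ti)} of the underlying TiRS digraph to get $w$ with $wE\subseteq yE$ and $Ew\subseteq Eg(x)$; conclude $\varphi(w)=1$ by Lemma~\ref{lem:E}(i), hence $(\neg\varphi)(g(w))=0$; and transport $Ew\subseteq Eg(x)$ through (M3) and (M1) to get $g(w)E\subseteq xE$, whence $(x,g(w))\in E$ by reflexivity --- contradicting $E$-preservation of $\psi$. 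The same remark applies, more mildly, to your $E$-preservation step: because (M2) and (M3) act on the inclusions $xE\subseteq yE$ and $Ex\subseteq Ey$ rather than on single edges, one must first invoke (Ti) on the edge $(x,y)$ to produce an interpolant $z$ before (M2)/(M3) and Lemma~\ref{lem:E} can be applied. So the missing idea is the systematic use of (Ti) to convert edges into the subset inclusions that (M2)/(M3) consume; (O) is needed only for the complementation laws, where you do use it correctly.
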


\begin{proof}
We firstly show that $\neg \varphi$ preserves $E$. Suppose that $(x,y)\in E$ for $x,y\in X$ and $(\neg \varphi)(x)=1$ and $(\neg \varphi)(y)=0$. Then $\varphi(g(x))=0$ and $\varphi(g(y))=1$, i.e. $g(x)\in \varphi^{-1}(0)$ and $g(y)\in \varphi^{-1}(1)$. By condition (Ti) 
we obtain from $(x,y)\in E$ that there is $z\in X$ with $zE\subseteq xE$ and $Ez\subseteq Ey$. 
Now the former by (M2) gives $Eg(z) \subseteq Eg(x)$, whence 
$\varphi(g(z))=0$
by Lemma~\ref{lem:E},
while the latter by (M3) gives $g(z)E \subseteq g(y)E$, whence 
$\varphi(g(z))=1$, a contradiction. Hence $\neg \varphi$ preserves $E$.

To show that $\neg \varphi$ is continuous, we notice that $(\neg \varphi)^{-1}(1)=\{x\in X\mid \varphi(g(x))=0\}=\{x\in X\mid g(x)\in \varphi^{-1}(0)\}=g^{-1}(\varphi^{-1}(0))$, 
which is an open set since $\varphi^{-1}(0)$ is open and $g$ is continuous. Analogously, 
$(\neg \varphi)^{-1}(0)=g^{-1}(\varphi^{-1}(0))$ is open.

To show that $\neg \varphi$ is an MPM, it remains to prove the maximality of $\neg \varphi$. Suppose that $\neg \varphi$ can be properly extended to $\psi$ so that there exists $x \in X$ such that $\psi(x)=1$ but $(\neg \varphi)(x)\neq 1$ and $(\neg \varphi)(x)\neq 0$. We will show that $\psi$ cannot be an $E$-preserving extension of $\neg \varphi$. Specifically, we will show that there exists $z$ such that $(\neg \varphi)(z)=0$ and $(x,z) \in E$. 

From $(\neg \varphi)(x) \neq 1$ we get $\varphi(g(x))\neq 0$. Hence there exists $y \in \varphi^{-1}(1)$ such that $(y,g(x))\in E$. By (Ti), there exists $w$ with $wE \subseteq yE$ and $Ew \subseteq Eg(x)$. By Lemma~\ref{lem:E}(i) we get $\varphi(w)=1$. Notice that $\varphi(w)=\varphi(g(g(w)))=1$ and so $(\neg \varphi)(g(w))=0$. By (M3) we get $g(w)E \subseteq xE$. Hence $(x,g(w))\in E$. Now let $g(w)=z$. 

A similar proof can be used for the case of $\psi$ extending $\neg \varphi$ 
such that for some 
$x \notin (\neg \varphi)^{-1}(1)\cup (\neg \varphi)^{-1}(0)$ we have $\psi(x)=0$. 
It will use (Ti) combined with (M2).

Now we show that $\neg \varphi$ 
satisfies 
the orthocomplement identities.

Firstly, it is easy to see that for any $x\in X$, $(\neg \neg \varphi)(x)=1$ iff $(\neg \varphi)(g(x))=0$ iff 
$\varphi(g(g(x)))=1$ 
iff $\varphi(x)=1$. Analogously, $(\neg \neg \varphi)(x)=0$ iff $\varphi(x)=0$. Hence $\neg \neg \varphi = \varphi$ as required.

We will prove that $\neg$ is order-reversing, from which 
the de Morgan laws 
will follow. 
Assume
that for $\varphi,\psi\in 
\mpm{\X}{\twoT_\T}$ 
we have $\varphi 
\leqslant \psi$. 
It follows that $\varphi^{-1}(1)\subseteq \psi^{-1}(1)$, and equivalently, $\psi^{-1}(0) \subseteq \varphi^{-1}(0)$. To show that $\neg \psi 
\leqslant 
\neg \varphi$, we are going to prove  $(\neg \psi)^{-1}(1) \subseteq (\neg \varphi)^{-1}(1)$. So let for $x\in X$, $x\in (\neg \psi)^{-1}(1)$. Then 
$\psi (g(x))=0$, 
which by $\psi^{-1}(0) \subseteq \varphi^{-1}(0)$ gives  
$\varphi (g(x))=0$,  
whence $(\neg \varphi)(x)=1$, i.e. $x\in (\neg \varphi)^{-1}(1)$ as required.

We notice that for all $x\in X$, $(\neg \varphi \wedge \varphi)(x)=0$ iff $x\in (\neg \varphi \wedge \varphi)^{-1}(0)$ iff there is no $y\in (\neg \varphi \wedge \varphi)^{-1}(1)$ with $(y,x)\in E$ iff there is no $y\in (\neg \varphi)^{-1}(1) \cap (\varphi)^{-1}(1)$ with $(y,x)\in E$ iff there is no $y$ such that $\varphi(g(y))=0$ and $\varphi(y)=1$ and $(y,x)\in E$. Suppose now there is $y\in X$ such that $\varphi(g(y))=0$ and $\varphi(y)=1$ and $(y,x)\in E$. By (O), for $y$ there exists $w\in X$ such that $wE\subseteq yE$ and $Ew\subseteq Eg(y)$. As 
$w \in Ew$,
we get $(w,g(y))\in E$ and since $\varphi(g(y))=0$, we obtain $\varphi(w)\ne 1$. However, $wE\subseteq yE$ and $\varphi(y)=1$, whence by Lemma~\ref{lem:E}(i), $\varphi(w)= 1$, a contradiction. Hence $\neg \varphi \wedge \varphi=
\varphi_0$.

(iv) One can analogously prove that $\neg \varphi \vee \varphi=
\varphi_1$.
\end{proof}

We are now ready to present a dual representation theorem for general ortholattices.

\begin{theorem}
For an ortholattice 
$\Lalg'
= (L;\lor ,\land ,',0,1)$, the map $ a\mapsto e_a$ is an ortholattice isomorphism of 
$\Lalg'$ 
onto 
$\mathcal{E}(\mathcal{D}(\Lalg'))=(\mpm{\mathbf{X_L}}{\twoT_\T},
\wedge,\vee,\varphi_0,\varphi_1,\neg)$, 
and hence 
$\Lalg' \cong \mathcal{E}(\mathcal{D}(\Lalg'))$. 
\end{theorem}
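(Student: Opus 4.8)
The plan is to isolate the one genuinely new ingredient, since the lattice part of the statement is already available. First I would invoke Proposition~\ref{lem-eval}(iii): applied to the lattice-reduct $\Lalg$ of $\Lalg'$, it tells us that $a \mapsto e_a$ is already a bounded-lattice isomorphism of $\Lalg$ onto the lattice-reduct $(\mpm{\mathbf{X_L}}{\twoT_\T}, \wedge, \vee, \varphi_0, \varphi_1)$ of $\mathcal{E}(\mathcal{D}(\Lalg'))$. In particular it is a bijection that preserves $\wedge$, $\vee$, $0$ and $1$. Consequently the whole theorem reduces to verifying that this bijection also intertwines the two unary operations, that is, that $e_{a'} = \neg e_a$ holds for every $a \in L$; once this identity is secured, $a \mapsto e_a$ is a bijective ortholattice homomorphism, hence an ortholattice isomorphism, giving $\Lalg' \cong \mathcal{E}(\mathcal{D}(\Lalg'))$.

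Before carrying this out I would record that $\mathcal{E}(\mathcal{D}(\Lalg'))$ really is an ortholattice, so that $\neg$ is genuinely available as an orthocomplement. By Proposition~\ref{prop:gmap-cont} the map $g$ on $P_{\Lalg}$ is continuous, by Theorem~\ref{prop:props-of-gmap} it satisfies (M1)--(M3) and (O), and $(P_{\Lalg}, E, \T)$ is a Plo\v{s}\v{c}ica space; hence $\mathcal{D}(\Lalg') = (P_{\Lalg}, E, \T, g)$ is an ortho-Plo\v{s}\v{c}ica space and Lemma~\ref{lem:neg-is-ortho} guarantees that its dual $\mathcal{E}(\mathcal{D}(\Lalg'))$ is an ortholattice with orthocomplement $\neg$.

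The core of the argument is the identity $e_{a'} = \neg e_a$, which I would establish by unwinding the definitions of $\neg$, of $e_a$, and of $g$ at an arbitrary $x \in P_{\Lalg}$. By the definition of $\neg$ we have $(\neg e_a)(x) = 1$ precisely when $e_a(g(x)) = 0$, i.e.\ when $a \in \dom g(x)$ and $g(x)(a) = 0$; by the definition of $g$ in Lemma~\ref{lem:gmap}, $g(x)(a) = 0$ holds exactly when $x(a') = 1$, while $a \in \dom g(x)$ holds exactly when $a' \in \dom x$. Therefore $(\neg e_a)(x) = 1 \iff x(a') = 1 \iff e_{a'}(x) = 1$, and symmetrically $(\neg e_a)(x) = 0 \iff x(a') = 0 \iff e_{a'}(x) = 0$. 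Comparing domains, both partial maps are undefined at $x$ exactly when $a' \notin \dom x$. Hence $\neg e_a$ and $e_{a'}$ coincide as partial maps on $P_{\Lalg}$, which is the required identity.

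Rather than a deep obstacle, the only point demanding care is the bookkeeping with partial maps: one must track not merely the values $0$ and $1$ but also the domains, checking in particular that $\dom(\neg e_a) = \{\, x \in P_{\Lalg} \mid a' \in \dom x \,\} = \dom(e_{a'})$. No structural input beyond the already established lattice representation theorem and the explicit description of $g$ is needed, so once the identity $e_{a'} = \neg e_a$ is in hand the theorem follows immediately.
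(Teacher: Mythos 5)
Your proposal is correct and follows essentially the same route as the paper: it invokes Proposition~\ref{lem-eval}(iii) for the lattice part and then verifies $e_{a'} = \neg e_a$ by the same chain of equivalences $(\neg e_a)(x)=1 \iff e_a(g(x))=0 \iff g(x)(a)=0 \iff x(a')=1 \iff e_{a'}(x)=1$ (and dually for the value $0$). Your additional explicit check that the domains of the two partial maps agree is a welcome but minor refinement of what the paper leaves implicit.
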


\begin{proof}
We know from the Plo\v{s}\v{c}ica representation theorem (see (iii) of Proposition~\ref{lem-eval})   
that it is a lattice isomorphism. We must show that $e_{\neg a} = \neg e_a$ for all $a \in L$. For any $x\in X_L$ we have $(\neg e_a)(x)=1$ iff $e_a(g(x))=0$ iff $g(x)(a)=0$ iff $x(\neg a)=1$ iff $e_{\neg a}(x)=1$. Analogously, 
$(\neg e_a)(x)=0$ iff $e_{\neg a}(x)=0$. 
\end{proof}

Finally, we present a dual representation theorem for 
ortho-Plo\v{s}\v{c}ica spaces.

\begin{theorem}\label{thm:orthospace}
Let $\mathbb{X}=(X,E,\T,g)$ be an ortho-Plo\v{s}\v{c}ica space. Then 
$\mathbb{X} \cong \mathcal{D}(\mathcal{E}(\mathbb{X}))$. 
\end{theorem}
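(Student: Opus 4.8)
The plan is to bootstrap from the lattice-level result already established for Plo\v{s}\v{c}ica spaces, namely Theorem~\ref{thm:graph-homeomorphic}, and then check that the extra orthocomplement structure is transported correctly. The key observation is that the lattice-reduct of the ortholattice $\mathcal{E}(\mathbb{X})=(\mpm{\X}{\twoT_\T},\wedge,\vee,\varphi_0,\varphi_1,\neg)$ is exactly the dual lattice $\mathcal{L}(\mathbb{X})=(\mpm{\mathbb{X}}{\twoT_\T},\leqslant)$. Consequently, by Definition~\ref{def:dual-orthospace}, the underlying Plo\v{s}\v{c}ica space of $\mathcal{D}(\mathcal{E}(\mathbb{X}))$ is precisely $\mathcal{D}(\mathcal{L}(\mathbb{X}))$, carrying in addition its own $g$-map, which I will denote $g'$, built from the orthocomplement $\neg$ via Lemma~\ref{lem:gmap}. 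By Theorem~\ref{thm:graph-homeomorphic} the map $\vartheta\colon x\mapsto\varex$ from Lemma~\ref{lem:defn-v} is already a digraph-homeomorphism from $(X,E,\T)$ onto $\mathcal{D}(\mathcal{L}(\mathbb{X}))$. Since an isomorphism of ortho-Plo\v{s}\v{c}ica spaces is a digraph-homeomorphism that additionally commutes with the $g$-maps, the only thing left to verify is the intertwining identity $\vartheta\circ g = g'\circ\vartheta$, that is, $\varepsilon_{g(x)}=g'(\varex)$ for all $x\in X$.

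To establish this identity I would fix $x\in X$ and $\varphi\in\mpm{\mathbb{X}}{\twoT_\T}$ and unwind the three relevant definitions in turn. Applying Lemma~\ref{lem:gmap} to the ortholattice $\mathcal{E}(\mathbb{X})$, whose orthocomplement is $\neg$, gives $(g'(\varex))(\varphi)=0$ exactly when $\varex(\neg\varphi)=1$ and $(g'(\varex))(\varphi)=1$ exactly when $\varex(\neg\varphi)=0$. By Lemma~\ref{lem:defn-v}, $\varex(\neg\varphi)=(\neg\varphi)(x)$ whenever $x\in\dom(\neg\varphi)$, and by the definition of $\neg$ in $\mathcal{E}(\mathbb{X})$ one has $(\neg\varphi)(x)=1$ iff $\varphi(g(x))=0$ and $(\neg\varphi)(x)=0$ iff $\varphi(g(x))=1$. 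Chaining these equivalences yields $(g'(\varex))(\varphi)=0$ iff $\varphi(g(x))=0$ and $(g'(\varex))(\varphi)=1$ iff $\varphi(g(x))=1$; in other words $(g'(\varex))(\varphi)=\varphi(g(x))=\varepsilon_{g(x)}(\varphi)$. The same chain shows that $g'(\varex)$ is defined at $\varphi$ iff $g(x)\in\dom\varphi$, which is precisely the domain of $\varepsilon_{g(x)}$, so the two partial maps agree everywhere. Hence $g'(\varex)=\varepsilon_{g(x)}$, which is the desired intertwining condition, and $\vartheta$ is an isomorphism of ortho-Plo\v{s}\v{c}ica spaces.

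I do not expect a genuine obstacle here: the theorem essentially reduces to the already-proved second-duality at the level of Plo\v{s}\v{c}ica spaces (Theorem~\ref{thm:graph-homeomorphic}) together with the bookkeeping check above. The step requiring the most care is the correct threading of the three definitions (of $g'$ via Lemma~\ref{lem:gmap}, of the evaluation maps $\varex$ via Lemma~\ref{lem:defn-v}, and of $\neg$ in $\mathcal{E}(\mathbb{X})$), and in particular confirming that the domains of $g'(\varex)$ and $\varepsilon_{g(x)}$ coincide rather than merely that the two maps agree where both are defined. It is also worth recording explicitly, before the computation, that the correct notion of isomorphism for ortho-Plo\v{s}\v{c}ica spaces is a digraph-homeomorphism commuting with the $g$-maps, so that verifying $\vartheta\circ g=g'\circ\vartheta$ is exactly what is needed to upgrade the Plo\v{s}\v{c}ica-space isomorphism of Theorem~\ref{thm:graph-homeomorphic} to an ortho-Plo\v{s}\v{c}ica-space isomorphism.
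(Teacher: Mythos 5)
Your proposal is correct and follows essentially the same route as the paper: both invoke Theorem~\ref{thm:graph-homeomorphic} to get the underlying Plo\v{s}\v{c}ica-space isomorphism $\vartheta\colon x\mapsto\varepsilon_x$ and then verify the intertwining identity $\varepsilon_{g(x)}=\overline{g}(\varepsilon_x)$ by chaining the definitions of the dual $g$-map, the evaluation maps, and $\neg$. Your explicit attention to the coincidence of domains is a slightly more careful rendering of a point the paper handles implicitly, but the argument is the same.
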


\begin{proof}
Our springboard will be Theorem~\ref{thm:graph-homeomorphic}. 
We define a map $\vartheta : \mathbb{X} \to
\mathcal{D}(\mathcal{L}(\mathbb{X}))$ by
$\vartheta (x)=\varepsilon_x$, where 
 $\varepsilon_x$ is defined as in~Lemma~\ref{lem:defn-v}. 
We showed in~\cite[Theorem 3.10]{P6} that the map $\vartheta$ is well-defined and it is a Plo\v{s}\v{c}ica space isomorphism. We must now show that $\vartheta$ preserves the $g$-map.

Let $\overline{g}$ denote the $g$-map on the space $\mathcal{D}(\mathcal{E}(\mathbb{X}))$. We need to prove that for all $x \in X$, $\vartheta(g(x)) = \overline{g} (\vartheta(x))$. We notice  that for all $\varphi \in 
\mpm{\mathbf{X}}{\twoT_\T}$ 
and $g(x)\in \dom\varphi$, where $x\in X$, we have $\vartheta(g(x))(\varphi) =1$ iff $\varepsilon_{g(x)}(\varphi) =1$ iff $\varphi(g(x))=1$ iff $(\neg \varphi)(x)=0$ iff $\varepsilon_x(\neg \varphi)=0$ iff $\overline{g} (\varepsilon_x)(\varphi)=1$. Analogously, $\varepsilon_{g(x)}(\varphi) =0$ iff $\overline{g} (\varepsilon_x)(\varphi)=0$. Hence for all $x \in X$, 
$$
\vartheta(g(x)) = \varepsilon_{g(x)} = \overline{g} (\varepsilon_x) = \overline{g} 
(\vartheta(x))$$
as required.
\end{proof}

\section{Examples}\label{sec:examples}

In this section, we present the dual ortho-Plo\v{s}\v{c}ica spaces of three different ortholattices: 
the 96-element free orthomodular lattice  $\mathcal{F}_{\mathcal{OM}}(2)$ on $2$ generators, the infinite ortholattice $\mathbf{O}_{\mathbb{Z}}$ (which is not orthomodular), and the infinite modular ortholattice $\mathbf{M}_{\infty}$. 

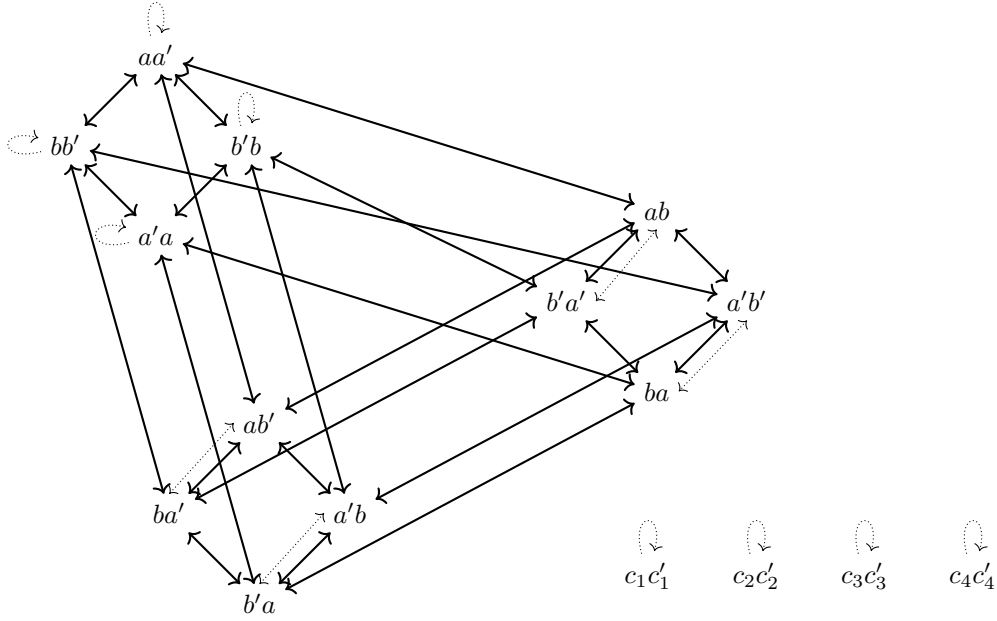
\begin{figure}[ht]
\centering
\begin{tikzpicture}[scale=1.2]

\begin{scope}[xshift=-2cm]
\begin{scope}[xshift=-1.95cm,yshift=2.28cm]
\node[] (aa') at (0,0) {$aa'$};
\node[] (bb') at (-1,-1) {$bb'$};
\node[] (b'b) at (1,-1) {$b'b$};
\node[] (a'a) at (0,-2) {$a'a$};

\path[thick,<->] (aa') edge (bb');
\path[thick,<->] (a'a) edge (bb');
\path[thick,<->] (aa') edge (b'b);
\path[thick,<->] (b'b) edge (a'a);
\end{scope}

\begin{scope}[xshift=3.6cm,yshift=0.55cm]
\node[] (ab) at (0,0) {$ab$}; 
\node[] (a'b') at (1,-1) {$a'b'$}; 
\node[] (ba) at (0,-2) {$ba$}; 
\node[] (b'a') at (-1,-1) {$b'a'$}; 
\path[thick,<->] (ab) edge (a'b');
\path[thick,<->] (a'b') edge (ba);
\path[thick,<->] (ba) edge (b'a');
\path[thick,<->] (b'a') edge (ab);
\end{scope}

\begin{scope}[xshift=-0.8cm,yshift=-1.8cm]
\node[] (ab') at (0,0) {$ab'$}; 
\node[] (a'b) at (1,-1) {$a'b$}; 
\node[] (b'a) at (0,-2) {$b'a$}; 
\node[] (ba') at (-1,-1) {$ba'$}; 
\path[thick,<->] (ab') edge (a'b);
\path[thick,<->] (a'b) edge (b'a);
\path[thick,<->] (b'a) edge (ba');
\path[thick,<->] (ba') edge (ab');
\end{scope}

\path[thick,<->] (aa') edge (ab);
\path[thick,<->] (aa') edge (ab');
\path[thick,<->] (ab') edge (ab);

\path[thick,<->] (a'a) edge (ba);
\path[thick,<->] (b'a) edge (ba);
\path[thick,<->] (a'a) edge (b'a);

\path[thick,<->] (bb') edge (ba');
\path[thick,<->] (bb') edge (a'b');
\path[thick,<->] (ba') edge (b'a');

\path[thick,<->] (b'b) edge (a'b);
\path[thick,<->] (a'b) edge (a'b');
\path[thick,<->] (b'b) edge (b'a');

\path (aa') edge[loop above,densely dotted] (aa');
\path (bb') edge[loop left,densely dotted] (bb');
\path (b'b) edge[loop above,densely dotted] (b'b);
\path (a'a) edge[loop left,densely dotted] (a'a);

\path (ab'.west) edge[<->, densely dotted] (ba'.north);
\path (a'b.west) edge[<->, densely dotted] (b'a.north);

\path (ab.south) edge[<->, densely dotted] (b'a'.east);
\path (a'b'.south) edge[<->, densely dotted] (ba.east);

\end{scope}

\begin{scope}[xshift=1.5cm,yshift=-3.5cm]
\node[] (c1c1') at (0,0) {$c_1c_1'$};
\path (c1c1') edge[loop above,densely dotted] (c1c1');
\node[] (c2c2') at (1.2,0) {$c_2c_2'$};
\path (c2c2') edge[loop above,densely dotted] (c3c3');
\node[] (c3c3') at (2.4,0) {$c_3c_3'$};
\path (c3c3') edge[loop above,densely dotted] (c3c3');
\node[] (c4c4') at (3.6,0) {$c_4c_4'$};
\path (c4c4') edge[loop above,densely dotted] (c4c4');
\end{scope}

\end{tikzpicture}
\caption{The dual digraph of $\mathcal{F}_{\mathcal{OM}}(2)$.  The $g$-map is indicated with the dotted arrows.} \label{fig:MO2}
\end{figure}

\begin{example}
The free orthomodular lattice  $\mathcal{F}_{\mathcal{OM}}(2)$ on $2$ generators 
is isomorphic to 
$ \mathsf{MO}_2 \times 
\mathbf{2}^4$ 
(cf. ~\cite{B84} or~\cite{HKPW97}). 
Let $a$, $b$, $a'$ and $b'$ be the atoms of $\mathsf{MO}_2$ 
and let $\bot$, $\top$ be the bounds of $\mathsf{MO}_2$.
For $\mathbf{2}^4$, let 
$\underline{0}$ 
 be the bottom element, $\underline{1}$
the top element,
and let $c_1$, $c_2$, $c_3$, $c_4$ be the atoms. 
There are only two types of  MDFIPs 
of $\mathcal{F}_{\mathcal{OM}}(2)$  
(equivalently, MPHs from 
$\mathcal{F}_{\mathcal{OM}}(2)$ to $\mathbf{2}$). The first type is 
$\langle {\uparrow}(x,\underline{0}), {\downarrow}(y,\underline{1}) \rangle$ where $x,y$ are atoms of $\mathsf{MO}_2$. The second type is 
$\langle {\uparrow}(\bot,c_i),{\downarrow}
(\top,c'_i)
\rangle$. 
For simplicity, in our figure we write $xy$ for an MDFIP of the first form, and $c_ic_i'$ for an MDFIP of the second form. 
The dual digraph of $\mathcal{F}_{\mathcal{OM}}(2)$ can be seen in Figure~\ref{fig:MO2}.  
It is 
the disjoint union of the dual digraph 
of $\mathsf{MO}_2$ 
and four isolated vertices with 
loops representing the dual of $\mathbf{2}^4$, so the 96-element algebra is represented by the 16-element dual digraph.  

The elements of the algebra are the MPMs from the 16-element dual digraph in Figure~\ref{fig:MO2} into the two-element digraph $\twoT=(\{0,1\}, \le)$. These MPMs can freely map the four isolated vertices into $0$ and $1$, which clearly corresponds to the factor $\mathbf{2}^4$ of $\mathcal{F}_{\mathcal{OM}}(2)$. The restrictions of the MPMs to the dual digraph of $\mathsf{MO}_2$  
obviously correspond to the factor $\mathsf{MO}_2$ of $\mathcal{F}_{\mathcal{OM}}(2)$.

Since the algebra (and hence the dual space) is finite, it is not difficult to show that the topology on the dual space must be the discrete topology. 

The author of~\cite{B84} made considerable effort to present a diagram of the 96-element algebra $\mathcal{F_{OM}}$, see~\cite[Figure 18]{B84}. 
The 16-element digraph in Figure~\ref{fig:MO2} emphasizes the efficiency of the dual representation of the same algebra. 
\end{example}

\begin{example}\label{ex:OZ}

Consider the infinite ortholattice $\mathbf{O}_{\mathbb{Z}}$ in Figure~\ref{fig:OZ}. The orthocomplement is defined by $(a_j)'=b_{-j}$ 
($j\in \mathbb{Z}$). 
Consider the filters $F_a=\{a_i \mid i \in \mathbb{Z}\}\cup\{1\}$ and $F_b=\{b_j\mid j \in \mathbb{Z}\}\cup\{1\}$, and the ideals 
$I_a=\{a_k \mid k \in \mathbb{Z}\}\cup\{0\}$ and 
$I_b=\{b_{\ell} \mid {\ell} \in \mathbb{Z}\}\cup\{0\}$.

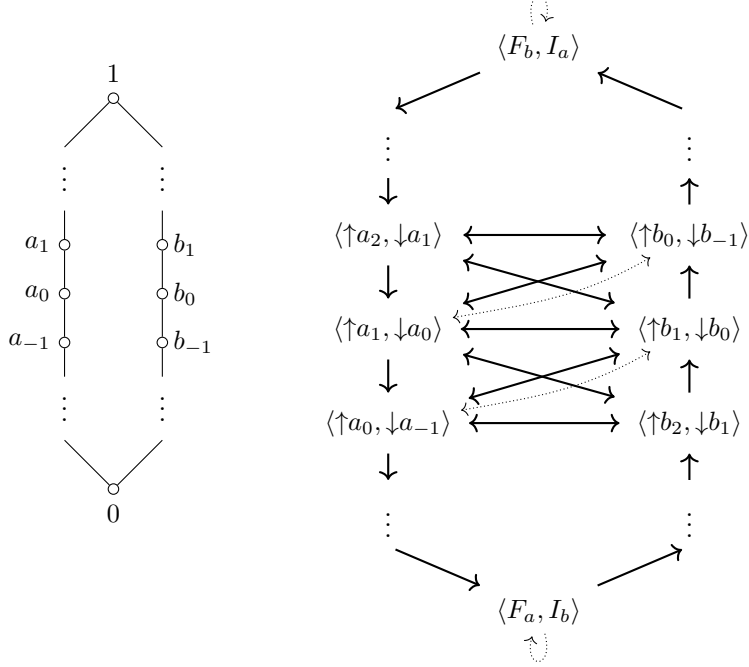
\begin{figure}[ht]
\centering
\begin{tikzpicture}[scale=0.65]
\begin{scope}[yshift=6.5cm]
\node[unshaded] (bot) at (0,-4) {};
\node[unshaded] (a1) at (-1,1) {};
\node[unshaded] (a0) at (-1,0) {};
\node[unshaded] (a-1) at (-1,-1) {};
\node[unshaded] (b1) at (1,1) {};
\node[unshaded] (b0) at (1,0) {};
\node[unshaded] (b-1) at (1,-1) {};
\node[unshaded] (top) at (0,4) {};
\node[] (rght) at (1,2.5) {$\vdots$};
\node[] (lft) at (-1,2.5) {$\vdots$};
\node[] (rght-bottom) at (1,-2.2) {$\vdots$};
\node[] (lft-bottom) at (-1,-2.2) {$\vdots$};

\draw[order] (-1,-3)--(bot)--(1,-3) ;
\draw[order] (-1,3)--(top)--(1,3) ;
\draw[order] (-1,-1.7)--(a-1)--(a0)--(a1)--(-1,1.7); 
\draw[order] (1,-1.7)--(b-1)--(b0)--(b1)--(1,1.7); 
\node[label,anchor=north] at (bot) {$0$};
\node[label,anchor=east,xshift=1pt] at (a-1) {$a_{-1}$};
\node[label,anchor=east,xshift=1pt] at (a0) {$a_0$};
\node[label,anchor=east,xshift=1pt] at (a1) {$a_1$};
\node[label,anchor=west,xshift=-2pt] at (b-1){$b_{-1}$};
\node[label,anchor=west,xshift=-2pt] at (b0){$b_0$};
\node[label,anchor=west,xshift=-2pt] at (b1){$b_1$};
\node[label,anchor=south] at (top) {$1$};
\end{scope}

\begin{scope}[xshift=8.7cm, scale=0.77]

\node[] (dots-LB) at (-4, 2.5) {$\vdots$};
\node[] (a0a-1) at (-4, 5) {$\MDFIP{a_0}{a_{-1}}$};
\node[] (a1a0) at (-4, 7.5) {$\MDFIP{a_1}{a_0}$};
\node[] (a2a1) at (-4, 10) {$\MDFIP{a_2}{a_1}$};
\node[] (dots-LT) at (-4, 12.5) {$\vdots$};

\node[] (dots-RT) at (4, 12.5) {$\vdots$};
\node[] (b0b-1) at (4, 10) {$\MDFIP{b_0}{b_{-1}}$};
\node[] (b1b0) at (4, 7.5) {$\MDFIP{b_1}{b_0}$};
\node[] (b2b1) at (4, 5) {$\MDFIP{b_2}{b_1}$};
\node[] (dots-RB) at (4, 2.5) {$\vdots$};

\node[] (FbIa) at (0, 15) {$\langle F_b,I_a \rangle$};
\node[] (FaIb) at (0, 0) {$\langle F_a, I_b \rangle$};

\path[thick,->,shorten >=3pt,shorten <=3pt] (dots-LB.south) edge (FaIb.north west);
\path[thick,->,shorten >=3pt,shorten <=3pt] (FaIb.north east) edge (dots-RB.south);

\path[thick,->,shorten >=3pt,shorten <=3pt] (dots-RT.north) edge (FbIa.south east);
\path[thick,->,shorten >=3pt,shorten <=3pt] (FbIa.south west) edge (dots-LT.north);

\path[thick,->,shorten >=3pt,shorten <=3pt] (dots-LT.south) edge  (a2a1.north);
\path[thick,->,shorten >=3pt,shorten <=3pt] (a2a1.south) edge  (a1a0.north);
\path[thick,->,shorten >=3pt,shorten <=3pt] (a1a0.south) edge  (a0a-1.north);
\path[thick,->,shorten >=3pt,shorten <=3pt] (a0a-1.south) edge  (dots-LB.north);

\path[thick,->,shorten >=3pt,shorten <=3pt] (b2b1.north) edge  (b1b0.south);
\path[thick,->,shorten >=3pt,shorten <=3pt] (b1b0.north) edge  (b0b-1.south);
\path[thick,->,shorten >=3pt,shorten <=3pt] (b0b-1.north) edge  (dots-RT.south);
\path[thick,->,shorten >=3pt,shorten <=3pt] (dots-RB.north) edge  (b2b1.south);

\path[thick,<->,shorten >=3pt,shorten <=3pt] (a0a-1.east) edge  (b2b1.west);
\path[thick,<->,shorten >=3pt,shorten <=3pt] (a1a0.east) edge (b1b0.west);
\path[thick,<->,shorten >=3pt,shorten <=3pt] (a0a-1.north east) edge  (b1b0.south west);
\path[thick,<->,shorten >=5pt,shorten <=4pt] (a1a0.south east) edge  (b2b1.north west);

\path[thick,<->,shorten >=5pt,shorten <=4pt] (a1a0.north east) edge  (b0b-1.south west);
\path[thick,<->,shorten >=5pt,shorten <=4pt] (a2a1.east) edge  (b0b-1.west);
\path[thick,<->,shorten >=5pt,shorten <=4pt] (a2a1.south east) edge  (b1b0.north west);

\path (FbIa) edge[loop above,densely dotted] (FbIA);
\path (FaIb) edge[loop below,densely dotted] (FaIb);
\path[<->] (a1a0) edge[out=10,in=210,densely dotted] (b0b-1); 
\path[<->] (a0a-1) edge[out=10,in=210,densely dotted] (b1b0); 
\end{scope}

\end{tikzpicture}
\caption{The infinite ortholattice  $\mathbf{O}_{\mathbb{Z}}$ and its dual digraph. The involutive map $g$ is indicated with dotted arrows. Not all edges of the digraph are included. Refer to Table~\ref{table:digraph-of-OZ} for a complete description.}
\label{fig:OZ}
\end{figure}

The topology on the dual digraph can be described by subbasic closed sets of the form: 
$$V_c=\{\,\langle F,I \rangle \mid 
c\in I 
\,\}\quad \text{and} \quad 
W_c 
=\{\,\langle F,I\rangle\mid 
c \in F 
\,\}.$$
For $c=a_k$ 
($k\in \mathbb{Z}$) 
 we get:
$$V_c= \{\MDFIP{a_{j+1}}{a_j} \mid j \geqslant k \} \cup\{\langle F_b,I_a\rangle\}\text{ and }  W_c = 
\{ \MDFIP{a_{j+1}}{a_j} \mid j<k \} \cup \{\langle F_a, I_b\rangle\}.
$$
For $c=b_k$ ($k\in \mathbb{Z}$) 
 we get:
$$V_c= \{\MDFIP{b_{j+1}}{b_j} \mid j \geqslant k \} \cup\{\langle F_a,I_b\rangle\}\text{ and }  W_c = 
\{ \MDFIP{b_{j+1}}{b_j} \mid j<k \} \cup \{\langle F_b, I_a\rangle\}.
$$

Now for $k \in \mathbb{Z}$ we can calculate 
$W_{a_{k+1}}\cap V_{a_k}=
\{\langle 
{\uparrow}a_{k+1},{\downarrow}a_k
\rangle\}$, $W_{b_{k+1}}\cap V_{b_k} = \{ \langle 
{\uparrow}b_{k+1},{\downarrow}b_k
\rangle\}$, 
$\bigcap \{\, W_{a_i} \mid i \in \mathbb{Z}\,\} = \{ \langle 
F_a, I_b\rangle\}$,
and  $\bigcap \{\, V_{b_i}\mid i\in\mathbb{Z}\,\} = \{ \langle 
F_a,I_b\rangle\}$.
From this we see that every singleton from the digraph is closed, and hence  the topology is Hausdorff. 

The edges of the digraph are given in Table~\ref{table:digraph-of-OZ}. In the row of $\langle F,I\rangle$ and the column of $\langle G,J \rangle$ we write 1 iff $\langle F,I \rangle E \langle G,J \rangle$ iff $F \cap J = \emptyset$. 

 \begin{table}[h!]
\centering 
\begin{tabular}{|  c ||  c  |  c   | c  | c|c|c|  }
\hline & $\langle F_a,I_b \rangle$ & $\langle F_b, I_a\rangle $ & $\MDFIP{a_{j+1}}{a_j}$ & $\MDFIP{a_{k+1}}{a_k}$ &$\MDFIP{b_{j+1}}{b_j}$  &$\MDFIP{b_{k+1}}{b_k}$   \\\hline
\hline $\langle F_a, I_b\rangle $& 1 & 0 & 0& 0 &1 &1  \\
\hline $\langle F_b, I_a\rangle$ & 0 & 1 &1 &1 &0 &0  \\
\hline $\MDFIP{a_{j+1}}{a_j}$ &1 &0 &1 &0 & 1& 1\\
\hline $\MDFIP{a_{k+1}}{a_k}$ &1 &0 &1 &1 & 1&1\\  
\hline $\MDFIP{b_{j+1}}{b_j}$ &0 &1 &1 &1 &1 &0\\  
\hline $\MDFIP{b_{k+1}}{b_k}$ &0 &1 &1 &1 &1 &1\\ \hline 
\end{tabular}

\vspace{4mm}

\caption{Table of edges for the dual digraph of 
$\mathbf{O}_{\mathbb{Z}}$ where 
$j,k \in \mathbb{Z}$
and $j<k$.}
\label{table:digraph-of-OZ}
\end{table} 

Table~\ref{table:MPHsandMPMs-OZ} indicates how there are essentially four types of MPHs.
The rows of the table represent the MPHs while the columns of the table represent the MPMs. More precisely, 
each row shows the 
images of elements  
of $\mathbf{O}_{\mathbb{Z}}$ under the MPH corresponding to the presented MDFIP. The algebra elements are then recovered via the MPMs whose 
images of the digraph vertices 
can be read off columnwise. The join of two MPMs is the column with at least as many 1s as the union of the two columns' 1s.

 \begin{table}[ht]
\centering 
\begin{tabular}{|  c ||  c  |  c   | c  | c|c|c|  }
\hline & $0$ & $a_i$ &$a_\ell$  &$b_i$  &$b_\ell$  & $1$   \\\hline
\hline $\langle F_a, I_b\rangle $& 0 &1  &1 &0  &0 &1  \\
\hline $\langle F_b, I_a\rangle$ & 0 & 0 &0 &1 &1 &1  \\
\hline $\MDFIP{a_{j+1}}{a_j}$ &0 &1 &0 &$-$ & $-$ & 1\\
\hline $\MDFIP{b_{j+1}}{b_j}$ &0 & $-$ & $-$ & 1 &0 &1\\  
\hline 
\end{tabular}

\vspace{4mm}
\caption{MPHs and MPMs for  $\mathbf{O}_{\mathbb{Z}}$ with 
$i,j,\ell \in \mathbb{Z}$,
$i>j$ and $\ell \leqslant j$.}
\label{table:MPHsandMPMs-OZ}
\end{table} 
\end{example}

\begin{example}\label{ex:Minfty}
Consider the infinite orthomodular lattice of height two  $\mathbf{M}_\infty$ (Figure~\ref{fig:Minfty}). The atoms are labelled by elements of $\mathbb{Z}{\setminus}\{0\}$ and define the orthocomplement by $\bot'=\top$ and $z'=-z$. 
The elements of the dual space are MDFIPs of the form $\MDFIP{a}{b}$ for $a,b \in \mathbb{Z}{\setminus}\{0\}$ and $a \neq b$.

\begin{figure}[ht]
\centering
\begin{tikzpicture}[scale=0.8]
\begin{scope}[yshift=2.5cm]
\node[unshaded] (bot) at (0,-2) {};
\node[unshaded] (a-3) at (-2.5,0) {};
\node[unshaded] (a-2) at (-1.5,0) {};
\node[unshaded] (a-1) at (-0.5,0) {};
\node[unshaded] (a1) at (0.5,0) {};
\node[unshaded] (a2) at (1.5,0) {};
\node[unshaded] (a3) at (2.5,0) {};
\node[unshaded] (top) at (0,2) {};
\node[] (dots-r) at (3.3,0) {$\hdots$};
\node[] (dots-l) at (-3.3,0) {$\hdots$};
\node[] (dots-rr) at (3.9,0) {$\hdots$};
\node[] (dots-ll) at (-3.9,0) {$\hdots$};
\node[] (dots-rrr) at (4.5,0) {$\hdots$};
\node[] (dots-lll) at (-4.5,0) {$\hdots$};

\draw[order] (bot)--(a-3)--(top);
\draw[order] (bot)--(a-2)--(top);
\draw[order] (bot)--(a-1)--(top);
\draw[order] (bot)--(a1)--(top);
\draw[order] (bot)--(a2)--(top);
\draw[order] (bot)--(a3)--(top);

\draw[order] (bot)--(-4,-0.3);
\draw[order] (-4,0.3)--(top);
\draw[order] (bot)--(4,-0.3);
\draw[order] (4,0.3)--(top);

\node[label,anchor=west,xshift=-3pt,yshift=-6pt] at (a1) {$1$};
\node[label,anchor=west,xshift=-3pt,yshift=-6pt] at (a2) {$2$};
\node[label,anchor=west,xshift=-3pt,yshift=-6pt] at (a3) {$3$};

\node[label,anchor=east,xshift=5pt,yshift=-6pt] at (a-1) {$-1$};
\node[label,anchor=east,xshift=5pt,yshift=-6pt] at (a-2) {$-2$};
\node[label,anchor=east,xshift=5pt,yshift=-6pt] at (a-3) {$-3$};

\node[label,anchor=north] at (bot) {$\bot$};
\node[label,anchor=south] at (top) {$\top$};
\end{scope}

\end{tikzpicture}
\caption{The infinite orthomodular lattice $\mathbf{M}_{\infty}$.}
\label{fig:Minfty}
\end{figure}
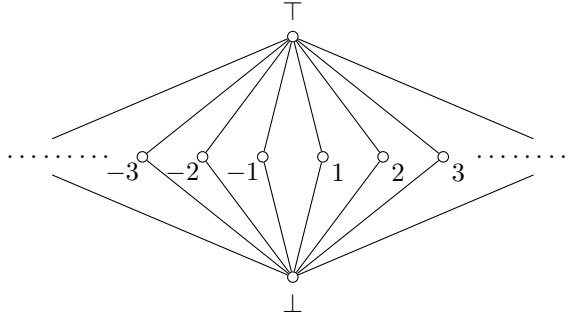

 \begin{table}[ht]
\centering 
\begin{tabular}{|  c ||  c  |  c   | c  | c|c|  }
\hline & $\bot$ & $j$ &$k$  &$\ell$  &   $\top$   \\\hline
\hline $\MDFIP{j}{k}$ & 0 &1  &0 &$-$  &1   \\ 
\hline $\MDFIP{j}{\ell}$ & 0 &1  &$-$ &0  &1   \\ 
\hline $\MDFIP{k}{\ell}$ & 0 &$-$  &1 &0  &1   \\ 
\hline $\MDFIP{k}{j}$ & 0 &0  &1 &$-$ &1   \\ 
\hline $\MDFIP{\ell}{j}$ & 0 &0  &$-$ &1  &1   \\ 
\hline $\MDFIP{\ell}{k}$ & 0 &$-$  &0 &1  &1   \\ 
\hline 
\end{tabular}

\vspace{4mm}
\caption{MPHs and MPMs for  $\mathbf{M}_{\infty}$. We have $j,k,\ell \in \mathbb{Z}{\setminus}\{0\}$ with all of them distinct. }
\label{table:MPHsandMPMs-Minfty}
\end{table} 

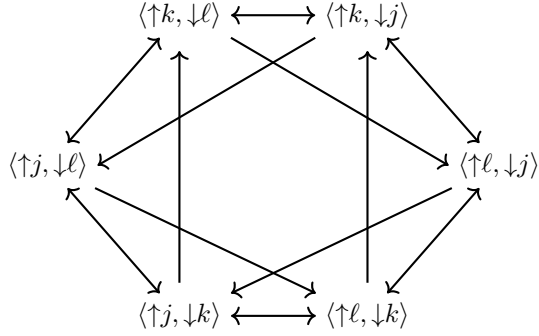
\begin{figure}[ht]
\centering
\begin{tikzpicture}[scale=1]
\begin{scope}[xshift=4.25cm]
\node[] (jk) at (-1.25,0) {$\MDFIP{j}{k}$};
\node[] (lk) at (1.25,0) {$\MDFIP{\ell}{k}$};
\node[] (jl) at (-3,2) {$\MDFIP{j}{\ell}$};
\node[] (lj) at (3,2) {$\MDFIP{\ell}{j}$};
\node[] (kl) at (-1.25,4) {$\MDFIP{k}{\ell}$};
\node[] (kj) at (1.25,4) {$\MDFIP{k}{j}$};

\path[thick,<->] (kl) edge (kj);
\path[thick,<->] (kj) edge (lj);
\path[thick,<->] (lj) edge (lk);
\path[thick,<->] (lk) edge (jk);
\path[thick,<->] (jk) edge (jl);
\path[thick,<->] (jl) edge (kl);

\path[thick,->] (kl.south east)  edge  (lj.west);
\path[thick,->] (lj) edge (jk.north east);
\path[thick,->,shorten >=5pt,shorten <=4pt] (jk.north) edge (kl.south);

\path[thick,->,] (kj.south west)  edge  (jl.east);
\path[thick,->] (jl)  edge  (lk.north west);
\path[thick,->,shorten >=5pt,shorten <=4pt] (lk.north) edge  (kj.south);

\end{scope}

\end{tikzpicture}
\caption{A part 
of the dual digraph of $\mathbf{M}_\infty$ (six vertices with distinct $j,k,l \in \mathbb{Z}{\setminus}\{0\}$).}
\label{fig:Minfty-dual-local}
\end{figure}

Table~\ref{table:MPHsandMPMs-Minfty} indicates 
MPHs and MPMs of $\mathbf{M}_\infty$ for distinct $j,k,l \in \mathbb{Z}{\setminus}\{0\}$. Each row shows the 
images of elements 
of $\mathbf{M}_{\infty}$ under the MPH corresponding to the presented MDFIP. The ortholattice elements are again recovered via the MPMs whose 
images of the digraph vertices   
can be read off columnwise and the join of two MPMs is the column with at least as many 1s as the union of the two columns' 1s.  

We observe  that $\MDFIP{a}{b} E \MDFIP{c}{d}$ iff $a \neq d$. In Figure~\ref{fig:Minfty-dual-local} we see a part of the dual digraph of $\mathbf{M}_\infty$ depicting six vertices corresponding to distinct $j,k,l \in \mathbb{Z}{\setminus}\{0\}$.

We note 
that for any $j,k \in \mathbb{Z}$ with $j\neq k$ we have $W_j \cap V_k = \{\langle {\uparrow}j, {\downarrow}k\rangle\}$ and hence the topology on the dual digraph of $\mathbf{M}_\infty$ is 
Hausdorff. 
\end{example}

\subsection*{Acknowledgements}
The first author acknowledges the hospitality of Matej Bel University during
a visit in August-September 2024. 
The second author acknowledges his appointment as a Visiting Professor at the University
of Johannesburg between 2020-2026 and also acknowledges the hospitality of the University
of Johannesburg during a two-week visit in July-August 2025.

The authors would also like to thank to Professor Gerhard D\"orfer for the hospitality of the Vienna University of Technology during a one-week visit in August 2022, where this study of the duals of ortholattices started. During this short visit as well as during many other visits and meetings the authors also greatly enjoyed the company of Professor G\"unther Eigenthaler, who passed away on February 14, 2025, just five days after reaching his jubilee of 75 years. He will always be remembered in our hearts and we are privileged to dedicate this paper to his memory.


\end{document}